\theoremstyle{plain}
\newtheorem{thm}{Theorem}[section]
\newtheorem*{prop*}{Proposition}
\newtheorem{lem}[thm]{Lemma}
\newtheorem*{lem*}{Lemma}
\newtheorem{dfn}[thm]{Definition}
\newtheorem{ques}[thm]{Question}
\newtheorem*{rem}{Remark}
\newcommand{\vol}{\textrm{Vol}}
\newcommand{\BBE}{\mathbb{E}}
\date{}
\title{\vspace{-0.7cm}A sub-exponential transition of the chromatic generalized Ramsey numbers}
\author{
Choongbum Lee \thanks{Department of Mathematics,
MIT, Cambridge, MA 02139-4307. Email: cb\_lee@math.mit.edu.
Research supported by NSF Grant DMS-1362326.}
\and
Brandon Tran  \thanks{Department of Mathematics,
MIT, Cambridge, MA 02139-4307. Email: btran115@mit.edu.}
}
\begin{document}

\maketitle

\begin{abstract}
A simple graph-product type construction shows that for all natural numbers $r \ge q$,
there exists an edge-coloring of the complete graph on $2^r$ 
vertices using $r$ colors where the graph consisting of the
union of arbitrary $q$ color classes has chromatic number $2^q$.
We show that for each fixed natural number $q$, if there exists an edge-coloring of
the complete graph on $n$ vertices using $r$ colors where the graph consisting 
of the union of arbitrary $q$ color classes has chromatic number at most $2^q -1 $, 
then $n$ must be sub-exponential in $r$.
This answers a question of Conlon, Fox, Lee, and Sudakov.
\end{abstract}

\section{Introduction}

The {\em Ramsey number} of a graph $G$ is defined as the minimum integer $n$ for which
every edge two-coloring of $K_n$, the complete graph on $n$ vertices, admits a 
monochromatic copy of $G$.
Ramsey's theorem asserts that the Ramsey number of the complete graph $K_k$ is finite
for all natural numbers $k$. It is a fundamental result in combinatorics and its
influence extends to various other fields of mathematics. 

Several variants of the Ramsey number has been suggested since its introduction. 
Let $p$ and $q$ be positive integers satisfying $p \ge 2$ and $2 \le q \le {p \choose 2}$.
The {\em generalized Ramsey number} $F(r,p,q)$ is defined
as the minimum $n$ such that for every edge $r$-coloring of $K_n$
there exists a set of $p$ vertices having at most $q-1$ distinct colors on the edges
with both endpoints in the set. 
Note that $F(2,p,2)$ is equivalent to the Ramsey number of $K_p$.
Generalized Ramsey numbers encode several interesting problems in combinatorics in one function, 
and are closely connected to important problems such as 
the Hales-Jewett theorem, the $(6,3)$-problem, quasirandom graphs,  
and Ramsey-coloring of hypergraphs. 
It was introduced by Erd\H{o}s and Shelah \cite{Erdos75, Erdos81}
around 40 years ago and then systematically studied by Erd\H{o}s and Gy\'arf\'as \cite{ErGy}. 

Note that the definition trivially implies $F(r,p,q) \le F(r,p,q')$ for $q' \le q$. 
Erd\H{o}s and Gy\'arf\'as proved a number of interesting results about the function $F(r,p,q)$, demonstrating
how for fixed $p$, the function falls off from being at least exponential in $r$ when $q=2$ 
to being about $\sqrt{2r}$ when $q = {p \choose 2}$ (for $p \ge 4$). 
In the process, they observed that the generalized 
Ramsey numbers satisfy the recurrence relation $F(r,p,q) \le r F(r,p-1,q-1)$ 
for all $r \ge 2$ and $p,q \ge 3$.
To prove the recurrence relation, suppose that $N \ge r F(r,p-1,q-1)$ for some $r\ge 2$, $p,q \ge 3$
and consider an edge $r$-coloring of $K_N$. Fix a vertex $v$, and note that by the
pigeonhole principle, there exists a set $X$ of size at least 
$\lceil\frac{N-1}{r}\rceil \ge F(r,p-1,q-1)$ for which all edges connecting $v$ to $X$
are of the same color. By definition, $X$ contains a set of $p-1$ vertices
having at most $q-2$ colors inside. Together with $v$, this set gives a set of $p$ vertices
having at most $q-1$ colors inside. Therefore $F(r,p,q) \le N = r F(r,p-1,q-1)$. 
Note that since $F(r,2,2)= 2$, this relation implies $F(r,p,p) \le 2r^{q-1}$ for all $p \ge 2$. 
Building on the work of Mubayi \cite{Mubayi}, and Eichhorn and Mubayi \cite{EiMu}, 
recently Conlon, Fox, Lee, and Sudakov \cite{CoFoLeSu1}
showed that $F(r,p,q)$ is super-polynomial in $r$ for all $q \le p-1$.
Hence a super-polynomial to polynomial transition of the generalized Ramsey numbers occurs
at $p = q$. Understanding such transitions is connected to many interesting
problems in Ramsey theory. See \cite{CoFoLeSu} for further information.


In another related work,  Conlon, Fox, Lee, and Sudakov \cite{CoFoLeSu}
introduced the following chromatic number version of generalized Ramsey numbers. 

\begin{dfn}
Let $p$ and $q$ be positive integers satisfying $p \ge 3$ and $2 \le q \le {p \choose 2}$.
For each positive integer $r$, define $F_\chi(r, p, q)$ as the 
minimum integer $n$ for which every edge-coloring of $K_n$ with $r$ colors
contains a $p$-chromatic subgraph receiving 
at most $q-1$ distinct colors on its edges.
\end{dfn}

An edge-coloring of the complete graph is a {\em chromatic-$(p,q)$-coloring} if the 
union of arbitrary $q-1$ color classes has chromatic number at most $p-1$. 
One can alternatively define $F_\chi(r,p,q)$ as the minimum $n$ for which there does not exist a
chromatic-$(p,q)$-coloring of $K_n$. 
While the former definition illuminates the Ramsey-type nature of the function
and the connection between $F(r,p,q)$ and $F_{\chi}(r,p,q)$ more clearly
(for example it immediately implies $F_{\chi}(r,p,q) \le F(r,p,q)$ for all $r,p,q$),
the latter highlights the essence of the function and is arguably a more natural definition.
Conlon, Fox, Lee, and Sudakov realized the importance of chromatic-$(p,q)$-colorings 
while studying a question on generalized Ramsey numbers related to 
the Hales-Jewett theorem. They established some similarities and differences between
$F(r,p,q)$ and $F_{\chi}(r,p,q)$, and suggested to study the two functions in more depth.

Suppose that there exists a chromatic-$(p,q)$-coloring of $K_n$ using $r$ colors. 
Partition the color set into $\lceil \frac{r}{q-1} \rceil$ sets each of size at most $q-1$ and
note that each set induces a graph of chromatic number at most $p-1$. By the product
formula of chromatic numbers, we see that $F_{\chi}(r,p,q) -1 \le (p-1)^{\lceil \frac{r}{q-1} \rceil}$.
Hence we see that the function $F_{\chi}(r,p,q)$ has a natural exponential upper bound. 
Quite surprisingly, this simple bound turns out to be tight for some choice of parameters. 
Consider a complete graph on the vertex set $\{0,1,\ldots, 2^r-1\}$ and color the edge
$\{v,w\}$ with color $i \in [r]$, if the binary expansions of
$v$ and $w$ first differ in the $i$-th digit.
Since each color class induces a bipartite graph, we see that for all $q \le r$, arbitrary
union of $q$ color classes induce a graph of chromatic number at most $2^q$ (one can 
in fact check that the chromatic number is exactly $2^q$). Thus
$F_{\chi}(r,2^{q}+1,q+1) -1 \ge 2^r$, and together with the upper bound established above,
we see that $F_{\chi}(r,2^{q}+1,q+1) = 2^r + 1$ whenever $r$ is divisible by $q$. 
Conlon, Fox, Lee, and Sudakov asked whether these values of $(p,q)$ are the `thresholds'
for $F_\chi(r,p,q)$ being exponential in $r$. More precisely, they 
asked whether $F_{\chi}(r,2^q,q+1) = 2^{o(r)}$ for all $q\ge 2$, and proved that this indeed is the 
case for $q=2$.
In this paper, we positively answer their question.

\begin{thm} \label{thm:main}
$F_{\chi}(r,2^q,q+1) = 2^{o(q)}$ for all $q\ge 2$. 
\end{thm}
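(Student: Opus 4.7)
My plan is to prove the theorem by induction on $q$. The base case $q = 2$, asserting $F_\chi(r,4,3) = 2^{o(r)}$, is the earlier result of Conlon, Fox, Lee, and Sudakov mentioned in the introduction. (I read the statement as $2^{o(r)}$ with $q$ fixed, in agreement with the abstract.) For the inductive step, assume $F_\chi(r, 2^{q-1}, q) = 2^{o(r)}$ and consider a chromatic-$(2^q, q+1)$-coloring of $K_n$ using $r$ colors. The aim is to extract a subset $V' \subseteq V(K_n)$ with $|V'| \ge n \cdot 2^{-o(r)}$ on which the induced coloring (still using at most $r$ colors) is chromatic-$(2^{q-1}, q)$. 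Applying the inductive hypothesis to the restriction then yields $|V'| \le 2^{o(r)}$, whence $n \le 2^{o(r)}$.

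The heart of the argument is a \emph{reduction lemma}: we need $\chi(G_{B'}|_{V'}) \le 2^{q-1}-1$ for every $(q-1)$-subset $B' \subseteq [r]$. For any fixed $B'$, the hypothesis gives $\chi(G_{B'}) \le \chi(G_{B' \cup \{c\}}) \le 2^q - 1$ for any extra color $c$, so $G_{B'}$ admits a proper $(2^q-1)$-coloring $\sigma_{B'}$. Taking the union $W_{B'}$ of any $2^{q-1}-1$ color classes of $\sigma_{B'}$ gives a subset with $|W_{B'}| \ge n \cdot \frac{2^{q-1}-1}{2^q-1}$ on which $\chi(G_{B'}) \le 2^{q-1}-1$. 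The goal is to choose the $W_{B'}$'s so that $V' := \bigcap_{B'} W_{B'}$ is sub-exponentially smaller than $V$ in $r$.

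The main obstacle is handling the simultaneous intersection over all $\binom{r}{q-1}$ choices of $B'$. Independent random selection of the $W_{B'}$ gives only $\mathbb{E}[|V'|] \ge n \cdot \bigl(\tfrac{2^{q-1}-1}{2^q-1}\bigr)^{\binom{r}{q-1}}$, a loss of factor $2^{\Omega(r^{q-1})}$ that is dramatically worse than the allowed $2^{o(r)}$. To succeed, the argument must exploit the strong correlations between the partitions $\sigma_{B'}$ for overlapping $B'$'s: these are not independent structures but are all refinements imposed by the single underlying edge coloring of $K_n$. A dependent-random-choice argument choosing $V'$ so that many $B'$-constraints are jointly satisfied, or a greedy/sunflower-type construction that bootstraps compatibility between $\sigma_{B'}$ and $\sigma_{B''}$ when $|B' \cap B''|$ is large, seems the most plausible route; alternatively, one may forgo induction on $q$ and proceed by a direct iterative argument that peels off $\Omega(r)$ colors per stage, amplifying the one-color gap between $2^q-1$ and $2^q$ across stages into a sub-exponential bound. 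I expect the delicate part to be showing that correlations reduce the cumulative size loss from $2^{\Omega(r^{q-1})}$ to $2^{o(r)}$; this is where any genuine new idea beyond the $q=2$ argument of Conlon--Fox--Lee--Sudakov must be inserted.
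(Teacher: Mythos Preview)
Your proposal is a plan rather than a proof, and it stops precisely at the hard step: you correctly observe that intersecting over all $\binom{r}{q-1}$ sets $W_{B'}$ costs a factor $2^{\Omega(r^{q-1})}$ and that overcoming this requires exploiting correlations among the partitions $\sigma_{B'}$, but you give no mechanism for doing so. There is no evident reason these correlations are strong enough: two sets $B',B''$ with $|B'\cap B''|=q-2$ yield proper colorings of graphs $G_{B'},G_{B''}$ that differ by swapping one color class, and the associated $(2^q-1)$-colorings need not be compatible in any useful way. Dependent random choice and sunflower arguments, as you suggest, do not obviously bridge a gap of order $r^{q-1}$ constraints down to $o(r)$, and no such reduction-to-$(q-1)$ argument is known. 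So the inductive route, as stated, has a genuine missing idea at its core.

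The paper does \emph{not} induct on $q$; it takes exactly the ``alternative'' you mention in passing and then set aside. The key construction is a nested hierarchy of balanced $\varepsilon$-dense pairs in $q-1$ levels: a dense pair in a densest color $c_1$, then inside each part a maximal packing of dense pairs in some color $c_2$, and so on down to level $q-1$. If this hierarchy can be built so that each level captures a positive fraction of the volume of the previous one (the ``well-balanced'' case), then for any further color $c$ the $(2^q-1)$-coloring of $G_{\{c_1,\ldots,c_{q-1},c\}}$, pushed down through the levels, forces a constant fraction of the bottom-level sets to be nearly monochromatic in that proper coloring and hence almost free of color $c$; averaging over all $c$ yields one bottom-level set on which $\Omega(r)$ colors vanish. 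If the hierarchy fails to be well-balanced at some level $k$, the failure itself means many colors admit no dense pair of the right scale inside some level-$k$ set, i.e.\ they become ``sparse'' there. Either way one passes to a subset of size $n\cdot 2^{-O(r^{1-1/q}\operatorname{polylog} r)}$ on which a constant fraction of the colors are eliminated or made sparse, and iterating $O(\operatorname{polylog} r)$ times finishes. The multi-level dense-pair structure and the well-balanced/not-well-balanced dichotomy are the new ingredients beyond the $q=2$ argument; your plan does not contain them.
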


In fact, we prove a slightly stronger statement asserting that for all $q \ge 2$, 
there exists a constant $c_q$ such that 
$F_{\chi}(r,2^q,q+1) \le 2^{c_q r^{1-1/q} (\log r)^q}$.
As noted by Conlon, Fox, Lee, and Sudakov,
Theorem~\ref{thm:main} establishes for each fixed $p$, the maximum value of $q$ for which 
$F_{\chi}(r,p,q)$ is exponential in $r$. To see this, suppose that $2^{d-1} < p \le 2^{d}$ for some
natural number $d$. As observed above, we have $F_{\chi}(r,p,d) \ge F_{\chi}(r, 2^{d-1}+1, d) = 2^{r}$.
On the other hand, Theorem~\ref{thm:main} implies $F_{\chi}(r,p,d+1) \le F_{\chi}(r, 2^d, d+1) = 2^{o(r)}$.
Hence $F_{\chi}(r,p,q)$ is exponential in $r$ if and only if $q \le \lceil \log p \rceil$. 

The rest of the paper is organized as follows. In Section~\ref{sec:pre} we prove some lemmas that will 
be repeatedly used throughout the paper. We first prove the $q=3$ case of 
Theorem~\ref{thm:main} in Section~\ref{sec:84} to illustrate the main ideas of our proof, and
then prove the remaining cases in Section~\ref{sec:q}.
We conclude with some remarks in Section~\ref{sec:remark}.

\medskip

\noindent \textbf{Notation}. A graph $G = (V,E)$ is given by a pair of vertex set $V$ and edge set $E$.
The {\em density} of graph is defined as the fraction of pairs of distinct vertices that form an edge, i.e.,
it is $\frac{2|E|}{|V|(|V|-1)}$.
For a family of sets $\mathcal{W}$, define $\vol(\mathcal{W}) := \Big| \bigcup_{W \in \mathcal{W}} W \Big|$.
Hence if $\mathcal{W}$ consists of disjoint sets, then $\vol(\mathcal{W}) = \sum_{W \in \mathcal{W}} |W|$. We use $\log$ to denote natural logarithm.
We use subscripts such as $R_{\ref{lem:intersecting_sets_size}}$ to denote
the constant $R$ from Theorem/Lemma/Proposition~\ref{lem:intersecting_sets_size}.

\section{Preliminaries} \label{sec:pre}

Fix a graph $G$. We say that a pair of vertex subsets $(V_1, V_2)$
is {\em balanced} if $|V_1| = |V_2|$. For a positive real number $\varepsilon$,
a pair of disjoint vertex subsets $(V_1, V_2)$ is \emph{$\varepsilon$-dense}
if for every pair of subsets $U_1 \subseteq V_1$ and $U_2 \subseteq V_2$
satisfying $|U_1| \ge \varepsilon |V_1|$ and $|U_2| \ge \varepsilon |V_2|$, 
we have $e(U_1, U_2) > 0$.
The following lemma asserts that every graph of large density contains a dense pair. 
The logarithmic factor in the exponent can be removed
by using a more detailed analysis such as that used by Peng, R\"odl, and Ruci\'nski \cite{PeRoRu}, 
but we chose to provide a slightly weaker version for the sake of simplicity.

\begin{lem}\label{lem:dense_pairs}
There exists $\varepsilon_0$ such that the following holds for all positive real numbers
$\varepsilon$ and $d$ satisfying $0 < \varepsilon < \varepsilon_0$ and $0 < d <1$.
If $G$ is an $n$-vertex graph of edge density at least $d$, then it contains a
balanced $\varepsilon$-dense pair $(X, Y)$ for which 
\begin{align*}
|X|=|Y|\ge n d^{\log(1/\varepsilon)/\varepsilon }.
\end{align*}
\end{lem}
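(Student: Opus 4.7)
The plan is to prove the lemma via an iterative density-increment argument. I would start from an initial balanced pair $(X_0,Y_0)$ of size $\lfloor n/2 \rfloor$ with bipartite edge density at least $d/2$, obtained by an averaging argument: the expected crossing density over a uniformly random balanced bipartition of $V(G)$ is $d(n-1)/n$, so some bipartition attains density at least $d/2$. At each step, I will pass to a sub-pair of strictly higher density, terminating once the density exceeds $1-\varepsilon^2$; at that point, the $\varepsilon$-dense property follows automatically, since an empty $\varepsilon m \times \varepsilon m$ sub-rectangle would require $\varepsilon^2 m_t^2$ non-edges, exceeding the $(1-d_t) m_t^2 < \varepsilon^2 m_t^2$ total.

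The refinement step is as follows. Suppose $(X_t,Y_t)$ is a balanced pair of size $m_t$ and density $d_t$ that is not $\varepsilon$-dense. By definition, there exist $X' \subseteq X_t$ and $Y' \subseteq Y_t$ with $|X'|, |Y'| \geq \varepsilon m_t$ and $e(X',Y') = 0$. The bipartite edges between $X_t$ and $Y_t$ are then confined to the three remaining regions $(X', Y_t \setminus Y')$, $(X_t \setminus X', Y')$, and $(X_t \setminus X', Y_t \setminus Y')$, whose total area is $(1-\varepsilon^2) m_t^2$, so by averaging at least one of them has density at least $d_t/(1-\varepsilon^2)$. I would pass to the densest of these, taking the balanced sub-pair $(X_t \setminus X', Y_t \setminus Y')$ when its density meets this bound (shrinking size by $1-\varepsilon$) and otherwise taking the densest lopsided sub-pair and restoring balance by random subsampling of the larger side (shrinking size by $\varepsilon$ but gaining extra density).

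The iteration terminates after $T = O(\log(1/d)/\varepsilon^2)$ steps, since density grows by a factor of at least $1/(1-\varepsilon^2)$ per step and cannot exceed $1$. To conclude that $|X_T| \geq n d^{\log(1/\varepsilon)/\varepsilon}$, I would carry out an amortized potential-function analysis of the form $\Phi_t = \log m_t + c \log(1/d_t)$ with $c$ chosen to make $\Phi_t$ monotone across both types of iteration step, then compare $\Phi_T$ (where $\log(1/d_T) = O(\varepsilon^2)$) with $\Phi_0$.

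The main obstacle will be the amortized size-vs-density bookkeeping in the final step: the naive choice of always taking a lopsided sub-pair gives only the weaker bound $n d^{\log(1/\varepsilon)/\varepsilon^2}$, and shaving the spurious factor of $1/\varepsilon$ to hit the target exponent $\log(1/\varepsilon)/\varepsilon$ requires exploiting the balanced $(X_t \setminus X', Y_t \setminus Y')$ option whenever feasible and calibrating the potential function accordingly (the more delicate analysis of Peng--R\"odl--Ruci\'nski, alluded to in the excerpt, eliminates even the remaining logarithmic factor).
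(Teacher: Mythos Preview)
Your density-increment plan is the iterative counterpart of what the paper does; the paper short-circuits the iteration by passing directly to the extremal pair. Concretely, after producing the initial balanced bipartition $(V_1,V_2)$ of density $\ge d$, it lets $\rho$ be the least value for which some balanced sub-pair $(W_1,W_2)$ with $|W_i|\ge\rho|V_i|$ has density $\ge \rho^{-\varepsilon/\log(1/\varepsilon)}d$, and then argues that this extremal pair is already $\varepsilon$-dense: if not, one of the three sub-rectangles (after balancing the lopsided ones by averaging) would contradict the minimality of $\rho$. This is exactly your potential-function scheme with $c=\log(1/\varepsilon)/\varepsilon$, packaged as a single extremal step rather than an iteration. (Incidentally, for the potential to give a \emph{lower} bound on $m_T$ you want $\Phi_t=\log m_t+c\log d_t$, not $+c\log(1/d_t)$; both terms of the latter decrease.)

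The substantive gap is your choice of threshold. If you switch to the lopsided option only when the balanced sub-pair's density falls below $d_t/(1-\varepsilon^2)$, then in the borderline case the lopsided pair's density is, by your own averaging, only $d_t/(1-\varepsilon^2)+o(1)$ as well. So Case~B can occur with density gain $1+O(\varepsilon^2)$ while shrinking size by a factor~$\varepsilon$, and \emph{no} value of $c$ below $\log(1/\varepsilon)/\varepsilon^2$ keeps the potential monotone there --- you are back at the weak bound you were trying to avoid. To make $c=\log(1/\varepsilon)/\varepsilon$ work you must lower the balanced threshold to $d_t(1-\varepsilon)^{-\varepsilon/\log(1/\varepsilon)}\approx d_t\bigl(1+\varepsilon^2/\log(1/\varepsilon)\bigr)$ and then verify that whenever the balanced sub-pair misses this, some balanced $\varepsilon m_t$-sub-pair inside a lopsided piece achieves density $\ge d_t\,\varepsilon^{-\varepsilon/\log(1/\varepsilon)}=d_t e^{\varepsilon}$. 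That verification is exactly the inequality
\[
2\varepsilon(1-\varepsilon)\,e^{\varepsilon}+(1-\varepsilon)^{2-\varepsilon/\log(1/\varepsilon)}<1
\]
which the paper checks by Taylor expansion, and it is the crux of the whole argument; your sketch has not yet confronted it.
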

\begin{proof}
For simplicity, assume that $n$ is even (the odd case can be similarly handled).
Let $G$ be an $n$-vertex graph of density at least $d$. 
Let $V_1 \cup V_2$ be a bipartition of $V(G)$ satisfying $|V_1| = |V_2| = \frac{n}{2}$ chosen
uniformly at random. Then the probability of a fixed edge crossing the partition
is $\frac{2{n-2 \choose n/2 - 1}}{{n \choose n/2}} = \frac{n}{2(n-1)}$. Therefore by 
linearity of expectation, we have $\BBE[e(V_1, V_2)] = \frac{n}{2(n-1)} {n \choose 2}d = \frac{n^2}{4}d$. 
Hence there exists a particular partition $V_1 \cup V_2$ for which $e(V_1, V_2) \ge \frac{n^2}{4}d$. 
Fix the bipartite graph induced on the pair of sets $(V_1, V_2)$. As established above, it has
density at least $d$. 

Let $\rho$ be the minimum positive real number for which there exists a pair $W_1 \subseteq V_1$
and $W_2 \subseteq V_2$ such that $m = |W_1|=|W_2| \ge \rho |V_1|$ and 
$e(W_1,W_2) \ge \rho^{-\varepsilon / \log(1/\varepsilon)} d|W_1||W_2|$. Note that we are taking a minimum over
a non-empty set since the pair $(V_1, V_2)$ satisfies the condition with $\rho = 1$. 
Further, since $e(W_1, W_2) \le |W_1||W_2|$, we have $\rho^{-\varepsilon / \log(1/\varepsilon)}d \le 1$, 
implying $\rho \ge d^{\log(1/\varepsilon)/\varepsilon}$.
Hence the lemma immediately holds if the pair $(W_1, W_2)$ is $\varepsilon$-dense.
Otherwise, there exists a pair of subsets
$(U_{1}, U_{2})$ such that $U_{i} \subseteq W_i$,
$|U_{i}| \ge \varepsilon m$ for both $i=1,2$, and $e(U_1, U_2) = 0$. 
By abusing notation, we take subsets if necessary and assume that
both $U_1$ and $U_2$ have sizes exactly $\lceil \varepsilon m\rceil$. 
Define $\alpha m = \lceil \varepsilon m \rceil$. 
If $\alpha m \ge \frac{m}{3}$, then $\frac{m}{3} \le \alpha m \le \varepsilon m + 1$ implies 
$m \le 3$ if $\varepsilon < \frac{1}{12}$. 
Since $(W_1, W_2)$ is not $\varepsilon$-dense, it is not complete.
Thus we can take a dense pair $(X,Y)$ consisting of a single edge 
satisfying $|X|=|Y|=1 \ge \frac{m}{3} \ge \frac{\rho}{3}|V_1|$ and increase the density
by a multiplicative factor at least $\frac{9}{8}$. 
However, this contradicts the minimality of $\rho$ since
$3^{\varepsilon / \log(1/\varepsilon)} < \frac{9}{8}$ for sufficiently small $\varepsilon$ 
(say $\varepsilon < \frac{1}{12}$).  
Thus we may assume that $\alpha m < \frac{m}{3}$, i.e., $\alpha < \frac{1}{3}$. 
Define $U_1' = W_1 \setminus U_1$ and $U_2' = W_2 \setminus U_2$. 
By the minimality of $\rho$, we see that
\[
	e(U_1', U_2') \le ((1-\alpha)\rho)^{-\varepsilon / \log(1/\varepsilon)}d |U_1'||U_2'|
	\quad \textrm{and} \quad
	e(U_1, U_2') \le (\alpha\rho)^{-\varepsilon  / \log(1/\varepsilon)}d |U_1||U_2'|,
\]
where the second inequality can be obtained by taking the average over all balanced pairs
$(U_1, U_2'')$ with $U_2'' \subseteq U_2'$. Hence
\begin{align*}
	e(W_1, W_2)
	=&\, e(U_1, U_2) + e(U_1', U_2) + e(U_1, U_2') + e(U_1', U_2') \\
	\le&\, 0 + 2\alpha(1-\alpha)m^2 \cdot (\alpha\rho)^{-\varepsilon/\log(1/\varepsilon)}d + (1-\alpha)^2m^2 \cdot ((1-\alpha)\rho)^{-\varepsilon/\log(1/\varepsilon)}d.
\end{align*}
Since $\alpha < \frac{1}{3}$, we have $2\alpha(1-\alpha) < (1-\alpha)^2$.
Further, $\alpha^{-\varepsilon /\log(1/\varepsilon)} + (1-\alpha)^{-\varepsilon/\log(1/\varepsilon)}$ is decreasing in $\alpha$ in the range $\alpha < \frac{1}{3}$.
Thus we may substitute $\alpha = \varepsilon$ to obtain an upper bound on $e(W_1, W_2)$. 
Since $e(W_1, W_2) \ge \rho^{-\varepsilon/\log(1/\varepsilon)} dm^2$, we see that
\begin{align*}
	1
	\le &\,
	2\varepsilon(1-\varepsilon)(\varepsilon)^{-\varepsilon/\log(1/\varepsilon)}
	+ (1-\varepsilon)^2 (1-\varepsilon)^{-\varepsilon/\log(1/\varepsilon)} \\
	= &\, 2\varepsilon(1-\varepsilon) (1 + \varepsilon + O(\varepsilon^2)) + 
	(1 - 2\varepsilon + \varepsilon^2)\left(1 - \frac{\varepsilon^2}{\log(1/\varepsilon)} + O(\varepsilon^{5/2}) \right) \\
	= &\,\left(2\varepsilon + O(\varepsilon^3)\right) + \left(1 - 2\varepsilon + \varepsilon^2 - \frac{\varepsilon^2}{\log (1/\varepsilon)} + O(\varepsilon^{5/2}) \right).
\end{align*}
where the asymptotics is taken as $\varepsilon \rightarrow 0$. The inequality above gives a contradiction for
sufficiently small $\varepsilon$. 
\end{proof}

We also need a technical lemma that will be repeatedly used throughout the rest of this paper. 

\begin{lem}\label{lem:intersecting_sets_size}
There exists a positive real $R$ such that the following holds for all $r\ge R$ and $\varepsilon \le \frac{1}{2}$. If $X$ is a set of size $n$, and $X_1, \ldots , X_r \subset X$ are subsets of size at least $(1-\varepsilon)n$, then there exists a set $I\subseteq [r]$ of size $|I|=\frac{r}{4}$ such that $|\bigcap_{i\in I} X_i| \ge (1-2\varepsilon)^{r/4}n$.
\end{lem}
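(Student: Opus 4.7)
My plan is a first moment argument over a uniformly random subset $I \subseteq [r]$ of size $k := r/4$ (rounding absorbed into $R$ when $r/4 \notin \mathbb{Z}$). Setting $s(x) := |\{i \in [r] : x \in X_i\}|$, the event $x \in \bigcap_{i \in I} X_i$ is equivalent to $I \subseteq \{i : x \in X_i\}$, so by linearity of expectation
\[
\BBE_I\Bigl|\bigcap_{i \in I} X_i\Bigr| = \frac{1}{\binom{r}{k}}\sum_{x \in X} \binom{s(x)}{k}.
\]
It therefore suffices to lower bound $\sum_x \binom{s(x)}{k}$, and to exhibit one $I$ achieving (or exceeding) this expectation.

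The double-counting identity $\sum_x s(x) = \sum_i |X_i| \ge (1-\varepsilon)rn$ gives $\bar s := n^{-1}\sum_x s(x) \ge (1-\varepsilon)r$, which is at least $r/2 \ge k$ since $\varepsilon \le 1/2$. The integer-valued function $s \mapsto \binom{s}{k}$ has second finite difference $\binom{s-1}{k-2} \ge 0$, hence is convex, so Jensen's inequality applied to $x$ uniform on $X$ gives
\[
\sum_{x \in X} \binom{s(x)}{k} \;\ge\; n \binom{\bar s}{k} \;\ge\; n \binom{(1-\varepsilon)r}{k},
\]
where $\binom{\cdot}{k}$ is interpreted through the polynomial formula on the reals.

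Finally, I would telescope
\[
\frac{\binom{(1-\varepsilon)r}{k}}{\binom{r}{k}} \;=\; \prod_{j=0}^{k-1}\frac{(1-\varepsilon)r - j}{r-j} \;=\; \prod_{j=0}^{k-1}\Bigl(1 - \frac{\varepsilon r}{r-j}\Bigr),
\]
and observe that for every $j \le k-1 < r/2$ we have $r - j > r/2$, so each factor is at least $1 - 2\varepsilon$. The product is therefore at least $(1-2\varepsilon)^k = (1-2\varepsilon)^{r/4}$, which gives $\BBE_I|\bigcap_{i \in I} X_i| \ge (1-2\varepsilon)^{r/4}n$ and the desired $I$ exists.

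The only real technicality I anticipate is applying Jensen at a non-integer $\bar s$: this is handled by passing to the piecewise-linear convex extension of $\binom{\cdot}{k}$, which agrees with the polynomial at integers and, on $[k-1,\infty)$ where the polynomial is itself convex, dominates it. Rounding $r/4$ to an integer when $4 \nmid r$, and any similar small discrepancies, are absorbed by choosing $R$ large enough.
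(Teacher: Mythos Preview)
Your argument is correct and essentially identical to the paper's: both double-count $\sum_x s(x) \ge (1-\varepsilon)rn$, apply convexity of $\binom{\cdot}{k}$ to get $\sum_x \binom{s(x)}{k} \ge n\binom{(1-\varepsilon)r}{k}$, and then bound the ratio $\binom{(1-\varepsilon)r}{k}/\binom{r}{k}$ factor-by-factor (the paper uses the slightly sharper $r-j \ge 3r/4$ in place of your $r-j > r/2$, but both yield $1-2\varepsilon$). Your explicit treatment of the Jensen step at a non-integer mean via the piecewise-linear extension is more careful than the paper's one-line justification.
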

\begin{proof}
For each $x\in X$, let $d(x)$ denote the number of subsets $X_i$ containing $x$. Then 
\begin{align*}
\sum_{x\in X} d(x) = \sum_i |X_i|\ge r(1-\varepsilon)n
\end{align*}
Now for each set $I\subseteq [r]$ of size $\frac{r}{4}$, let $X_I = \bigcap_{i\in I} X_i$. Then we have
\begin{align*}
\sum_{\substack{I\subseteq [r]\\
			|I|=\frac{r}{4}}} 
		|X_I| = \sum_{x\in X} \binom{d(x)}{r/4} \ge n \binom{r(1-\varepsilon)}{r/4}\ge (1-2\varepsilon)^{r/4}n\binom{r}{r/4},
\end{align*}
where the first inequality holds by convexity since $(1-\varepsilon)r$ is sufficiently large, and the second inequality holds 
since $\binom{r(1-\varepsilon)}{r/4}/ \binom{r}{r/4} \ge \left(\frac{r(1-\varepsilon)-r/4}{3r/4}\right)^{r/4}\ge (1-2\varepsilon)^{r/4}$.
Thus, there exists some set $I$ with $|X_I|\ge (1-2\varepsilon)^{r/4}n$, as desired.
\end{proof}

\section{Upper bound on $F_{\chi}(r,8,4)$} \label{sec:84}

In this section, we prove the first case of our main theorem to illustrate the important ideas in 
a simpler form.
The idea behind many results in Ramsey theory can be summarized as follows: given an $r$-coloring of the complete graph, find a subset of vertices in which less number of colors appear, and then repeat until no colors remain at which point the graph should be empty. For example, the upper bound on $F_\chi(r,4,3)$ of Conlon, Fox, Lee, and Sudakov was based on this idea. By utilizing the concept of $\varepsilon$-dense pairs, they were able to show that the recursion as above occurs. The straightforward generalization of their approach to chromatic-$(8,4)$-coloring fails because the structure one needs in order to force the recursion as above, a well-organized collection of $\varepsilon$-dense pairs, does not necessarily exist. Our key observation is that if such structure does not appear, then we can find a subset of vertices in which a large number of colors are `extremely' sparse. Our proof is based on a modified version of the recursion given above with an extra flexibility that allows us to work with such sparse colors. The following definitions formalize and quantifies the notion of sparseness that we will use.

\begin{dfn}
Let $x,\varepsilon \in [0,1]$ be real numbers and $r_1, r$ be natural numbers. Given an edge-coloring of $K_n$, we say that a color $c$ is \emph{$(x,\varepsilon)$-sparse} if the subgraph consisting of edges of color $c$
has no balanced $\varepsilon$-dense pairs with parts of size at least $xn$. An edge-coloring of $K_n$ is {\em $(r_1, r, x,\varepsilon)$-restricted} if the edge-coloring uses $r$ colors, out of which $r-r_1$ colors are $(x,\varepsilon)$-sparse. 
\end{dfn}

Note that a color is $(0,\varepsilon)$-sparse only if there are no edges of that color.
Define $G(r_1,r,x,\varepsilon)$ as the minimum $n$ such that every $(r_1, r, x,\varepsilon)$-restricted edge-coloring of $K_n$ contains an 8-chromatic subgraph receiving at most 3 distinct colors on its edges.
Note that we can recast the problem of determining $F_{\chi}(r,8,4)$ using this notation since
$F_{\chi}(r,8,4)=G(r,r,0,\varepsilon)$ for any $\varepsilon$. 
We now present the key lemma to our result. It provides a recursive formula for the $G(r_1,r,x,\varepsilon)$ functions described above.

\begin{lem}\label{lem:G_upper_bound}
There exists a constant $R$ such that the following holds. Suppose that $r_1,r$ are non-negative integers satisfying $r_1\ge R$, and $x,\varepsilon$ are positive real numbers satisfying $x\le 1$. 
Then, for every $(r_1, r, x, \varepsilon^2)$-restricted chromatic-$(8,4)$-coloring of $K_n$, either (1) there exists 
a subset of $\beta \alpha_0 n$ vertices on which the coloring is
$\left(\frac{31r_1}{32}, r, \beta^{-1} \alpha_0^{-1}\max(\alpha_1,x),\varepsilon^2\right)$-restricted, or (2) a subset of $\beta \alpha_1 \alpha_0 n$ vertices on which
the coloring is $\left(\frac{31r}{32}, \frac{31r}{32}, 0, \varepsilon^2 \right)$-restricted, where $\alpha_1=e^{-100\log^{2}(r)r^{2/3}}, \alpha_0=r^{-\log(1/\varepsilon^2)/\varepsilon^2},$ and $\beta=(1-16\varepsilon)^{\frac{r}{4}}$.
\end{lem}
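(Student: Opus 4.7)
The plan is to combine Lemmas~\ref{lem:dense_pairs} and~\ref{lem:intersecting_sets_size} with a case split driven by the chromatic-$(8,4)$ constraint, which forces the union of any three color classes to be $7$-chromatic, one short of the $2^{3}=8$ chromatic classes required for the extremal hypercube construction. The factor $\beta=(1-16\varepsilon)^{r/4}$ is exactly what Lemma~\ref{lem:intersecting_sets_size} produces with error parameter $8\varepsilon$, and $\alpha_{0}=r^{-\log(1/\varepsilon^{2})/\varepsilon^{2}}$ is exactly what Lemma~\ref{lem:dense_pairs} produces with density $\approx 1/r$; the extra $\alpha_{1}=e^{-100\log^{2}(r)r^{2/3}}$ in Case~(2) absorbs a further iterative extraction whose $r^{2/3}$ exponent coincides with the bound of the main theorem at $q=3$.

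Step~1 (Setup and averaging). For each of the $r_{1}$ non-sparse colors $c$ I would fix a witnessing $\varepsilon^{2}$-dense pair $(X_{c},Y_{c})$ with $|X_{c}|=|Y_{c}|\ge xn$. To every color $c\in[r]$ I would associate a ``good'' set $A_{c}\subseteq V(K_{n})$ of size at least $(1-8\varepsilon)n$, designed so that restricting to $A_{c}$ preserves sparseness of sparse colors and preserves a substantial portion of the witness pair for non-sparse colors (for instance by trimming vertices of anomalously small $c$-degree inside $X_{c}\cup Y_{c}$, or vertices contributing disproportionately to the sparse part of $c$). Applying Lemma~\ref{lem:intersecting_sets_size} with error $8\varepsilon$ then produces $I\subseteq[r]$ with $|I|=r/4$ and a common refinement $S=\bigcap_{i\in I}A_{i}$ of size at least $\beta n$ on which every surviving color retains the structural property encoded by $A_{c}$.

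Step~2 (Dense-pair extraction and case split). Pick a primary non-sparse color $c^{*}\in I$ and apply Lemma~\ref{lem:dense_pairs} to its edge subgraph on $S$ with density parameter $\approx 1/r$, producing a balanced $\varepsilon^{2}$-dense pair $(X^{*},Y^{*})$ of size at least $\alpha_{0}|S|\ge\alpha_{0}\beta n$; this yields the common $\alpha_{0}\beta n$ factor shared by both cases. Next count how many of the $r_{1}$ non-sparse colors still admit a balanced $\varepsilon^{2}$-dense pair of absolute size at least $\max(\alpha_{1},x)n$ inside $X^{*}\cup Y^{*}$. If at least $r_{1}/32$ of them do not, then $X^{*}\cup Y^{*}$ is the desired subset for Case~(1): those colors are now sparse at the new threshold $\beta^{-1}\alpha_{0}^{-1}\max(\alpha_{1},x)$, while the remaining non-sparse colors together with the originally sparse ones yield the $(\frac{31r_{1}}{32},r)$-restriction.

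Step~3 (Chromatic collapse and the main obstacle). Otherwise at least $31r_{1}/32$ non-sparse colors retain large dense pairs inside $X^{*}\cup Y^{*}$. In this rich situation I would invoke the chromatic-$(8,4)$ hypothesis: for every triple of colors containing $c^{*}$ the union is $7$-chromatic on $X^{*}\cup Y^{*}$, so a $7$-partition is available, and the ``missing'' $2^{3}-7=1$ chromatic class forces one of the three colors to be absent between a suitable large pair of parts. Iterating this observation along a carefully chosen sequence of primary colors --- grouped into blocks of roughly $r^{2/3}$ colors, each block costing an $\exp(-\Theta(r^{2/3}\log r))$ factor, with $O(\log r)$ rounds accumulating to $\alpha_{1}$ --- one locates a sub-subset of size at least $\alpha_{1}$ times the current one on which $r/32$ colors have been eliminated entirely, yielding Case~(2). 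This iterative chromatic argument, rather than the two preliminary lemmas, is the technical heart of the proof; the principal difficulty will be showing that the $7<2^{3}$ gap genuinely forces eliminated colors rather than merely further sparsification, and calibrating the block size to match the $r^{2/3}$ exponent of $\alpha_{1}$.
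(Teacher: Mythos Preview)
Your proposal misses the central structural idea and misidentifies the origin of $\alpha_{1}$. In the paper, the proof is \emph{not} iterative within the lemma: one first extracts a single $\varepsilon^{2}$-dense pair $V_{1}\cup V_{-1}$ in the densest color (red), of size $\alpha_{0}n$; then, \emph{inside each $V_{i}$}, for every non-sparse color $c$ one takes a maximal packing of $\varepsilon$-dense pairs (note the change from $\varepsilon^{2}$ to $\varepsilon$) with parts of size at least $\alpha_{1}m$. The case split is whether some single color (blue) has such packings of total volume $\ge 8\varepsilon m$ on \emph{both} sides. If not (Case~1), then for every non-sparse color there is a subset of size $(1-8\varepsilon)m$ of some $V_{i}$ with no $\varepsilon$-dense pair of size $\ge\alpha_{1}m$; Lemma~\ref{lem:intersecting_sets_size} then intersects $r_{1}/32$ of these to produce the set for outcome~(1). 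Thus $\alpha_{1}$ is simply a threshold parameter for second-level dense pairs, and the $\alpha_{1}$ factor in outcome~(2) arises because the final set lives inside one such part $V_{1,\iota}$, which by construction has $|V_{1,\iota}|\ge\alpha_{1}m$. There is no block-of-$r^{2/3}$-colors iteration; the specific value $e^{-100r^{2/3}\log^{2}r}$ is chosen only so that the later recursion in Theorem~\ref{thm:p=3} closes.

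Your Step~1 is also off track: the paper never builds per-color ``good'' sets $A_{c}\subseteq V(K_{n})$ of size $(1-8\varepsilon)n$ before passing to a dense pair; the $(1-8\varepsilon)$ factor appears only \emph{after} restricting to $V_{i}$, as the complement of a small dense-pair packing. More seriously, your Step~3 misses how the $7$-coloring is exploited: the nested $\varepsilon^{2}/\varepsilon$ structure is essential. The red $\varepsilon^{2}$-dense pair forces at least four of the seven color classes $A_{k}$ to be tiny in (say) $V_{1}$; then, for each blue $\varepsilon$-dense pair $V_{1,i}\cup V_{1,-i}$ inside $V_{1}$, each of the remaining three $A_{k}$ can be large on at most one side, so some side is $(1-6\varepsilon)$-contained in a single $A_{k}$ and hence free of color $c$. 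An averaging over colors then finds one $V_{1,\iota}$ that is $c$-free for $r/8$ colors simultaneously, and Lemma~\ref{lem:intersecting_sets_size} finishes. Without this two-level dense-pair hierarchy you have no mechanism to convert ``$7<8$'' into genuinely \emph{eliminated} colors rather than merely sparser ones, which is exactly the difficulty you flagged but did not resolve.
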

\begin{proof}
Let $R = 8 R_{\ref{lem:intersecting_sets_size}}$.
For notational convenience, we will drop $\varepsilon$ from our notation and use $x$-sparse for $(x,\varepsilon^2)$-sparse. 
Suppose that a $(r_1,r,x,\varepsilon^2)$-restricted chromatic-$(8,4)$-coloring of $K_n$ is given. Take a densest color, which we call red, and consider the graph $\mathcal{G}$ induced by its edges. The graph has density $d$ at least $\frac{1}{r}$ and so, by Lemma~\ref{lem:dense_pairs}, we can obtain an $\varepsilon^2$-dense pair $V_1\cup V_{-1}$ with parts of size $m=|V_1|=|V_{-1}|\ge nr^{-\log(1/\varepsilon^2)/\varepsilon^2}=n\alpha_0$.

Define $C_1$ as the set of colors that are not $x$-sparse and $C_2$ as the set of colors that
are $x$-sparse. Hence $|C_1| = r_1$ and $|C_2| = r - r_1$. 
Fix some color $c \in C_1$, and let $\mathcal{G}_c$ be the graph consisting of edges of color $c$. Consider $\mathcal{G}_c[V_1]$, the subgraph of $\mathcal{G}_c$ induced on $V_1$.
Let $W_{1,j} = V_{1,j} \cup V_{1,-j}$ for $j=1,2,\ldots,k_1$, be a maximal collection of vertex-disjoint
$\varepsilon$-dense pairs with parts of size at least $\alpha_1 m$. 
Define $\mathcal{L}_c(V_1) = \bigcup_{j \in [k_1]} \{V_{1,j}, V_{1,-j}\}$.
Similarly, let $W_{-1,j} = V_{-1,j} \cup V_{-1,-j}$ for $j=1,2,\ldots,k_{-1}$, be a maximal collection of 
vertex-disjoint $\varepsilon$-dense pairs in $\mathcal{G}_c[V_{-1}]$ with parts of size at least $\alpha_1 m$. 
Define $\mathcal{L}_c(V_{-1}) = \bigcup_{j \in [k_{-1}]} \{V_{-1,j}, V_{-1,-j}\}$.
We split up the remaining argument into two cases depending on whether
there exists a color $c$ such that $\vol(\mathcal{L}_c(V_{i})) \ge 8\varepsilon m$ for both $i=\pm1$.

\medskip

\noindent {\bf Case 1}. For every color $c \in C_1$, $\textrm{min} \{\vol(\mathcal{L}_c(V_1)), \vol(\mathcal{L}_c(V_{-1}))\} < 8\varepsilon m $. 

The condition of Case I and the maximality of the collection of sets $W_{i,j}$ imply that for each color 
$c \in C_1$, there exists a subset $S_c \subseteq V_1$ 
or $S_c \subseteq V_{-1}$ of size at least
$(1-8\varepsilon)m$ in which there is no $\varepsilon$-dense pair with parts of size 
at least $\alpha_1 m$.
Without loss of generality, we may assume that $S_c \subseteq V_1$ for at least $\frac{r_1}{2}$ colors $c$.
By Lemma \ref{lem:intersecting_sets_size}, there exists a set $\Gamma \subseteq C_1$ of $\frac{r_1}{8}$ colors such that $S = \bigcap_{c \in \Gamma} S_c$ is of size $|S| \ge (1-16\varepsilon)^{\frac{r_1}{8}} m$. 
By abusing notation, we let $\Gamma$ be an arbitrary subset of these colors of size $\frac{r_1}{32}$, and $S$ be an arbitrary subset of the intersection of $S_c$ for these $\frac{r_1}{32}$ colors of size exactly $(1-16\varepsilon)^{\frac{r}{4}} m=\beta m$.

Consider the coloring of the subgraph $K_S = K_n[S]$ induced on $S$. 
Note that all colors in $\Gamma$ have no $\varepsilon$-dense pair of size at least $\alpha_1 m$ in $S$.
Since $|S| = \beta \alpha_0 n$, each color $c \in \Gamma$ is $\beta^{-1}\alpha_{0}^{-1}\alpha_1$-sparse in $K_S$.
Similarly since the colors in $C_1$ were $x$-sparse in $K_n$, they are 
$\beta^{-1}\alpha_0^{-1}x$-sparse in $K_S$.
Hence, the coloring induced on $K_S$ is a $(\frac{31}{32}r_1, r, \beta^{-1}\alpha_0^{-1}\max(\alpha_1, x), \varepsilon^2)$-restricted chromatic-$(8,4)$-coloring of the complete graph on $|S|$ vertices. 

\medskip

\noindent {\bf Case 2}. There exists a color in $C_1$, which we call blue, such that for $i=\pm 1$, $\vol(\mathcal{L}_c(V_i)) \ge 8\varepsilon m$. 

Since the given coloring is a chromatic-$(8,4)$-coloring,
for each color $c \in C_1 \cup C_2$, the graph consisting of edges of colors red, blue, and $c$ is $7$-colorable. Thus, we can partition the vertices into independent sets $A_1, \ldots, A_7$. If some set $A_k$ intersects both $V_1$ and $V_{-1}$ in more than $\varepsilon^2m$ vertices, then by the definition of $\varepsilon^2$-dense pairs, we would have a red edge between a vertex in $A_k\cap V_1$ and a vertex in $A_k\cap V_{-1}$, violating properness of the $7$-coloring. Thus, either $V_1$ or $V_{-1}$, without loss of generality $V_1$, intersects at least $4$ of the $A_i$, say $A_1, A_2, A_3,A_4$, in at most $\varepsilon^2 m$ vertices. 
Let $I_c$ be the indices $i$ for which $|W_{1,i}\cap A_k|\ge \varepsilon |W_{1,i}|$ for some $k\in \{1,2,3,4\}$. Then, we have
\begin{align*}
\varepsilon\left| \bigcup_{i\in I_c} W_{1,i}\right| \le \left| \bigcup_{i\in I_c} (W_{1,i}\cap \bigcup_{1\le k\le 4} A_k) \right| \le  4\varepsilon^2 m.
\end{align*}
We can then deduce that
\begin{align*}
\left| \bigcup_{i\in I_c} W_{1,i}\right|  \le 4\varepsilon m.
\end{align*}
For $i\notin I_c$, consider the $\varepsilon$-dense pair $W_{1,i}=V_{1,i}\cup V_{1,-i}$. For each $k=5,6,7$, by the definition of dense pairs, we have $|V_{1,i}\cap A_k| \le \varepsilon |V_{1,i}|$ or $|V_{1,-i}\cap A_k| \le \varepsilon |V_{1,-i}|$ similarly as above, as otherwise there will be a blue edge within $A_k$. Moreover, for $k=1,2,3,4$, $|V_{1,i}\cap A_k| \le \varepsilon |V_{1,i}|$ and $|V_{1,-i}\cap A_k| \le \varepsilon |V_{1,-i}|$.
Thus for at least one $k=5,6,7$, $|V_{1,i}\cap A_k| \ge (1-6\varepsilon) |V_{1,i}|$ or $|V_{1,-i}\cap A_k| \ge (1-6\varepsilon) |V_{1,-i}|$. 
Hence for all colors $i \notin I_c$, there exists a subset of $V_{1,i}$ of size at least $(1-6\varepsilon)|V_{1,i}|$ 
that contains no edge of color $c$, or a subset of $V_{1,-i}$ of size at least $(1-6\varepsilon)|V_{1,-i}|$ 
that contains no edge of color $c$.
Let $J_c$ be the indices $i$ (positive or negative) for which $V_{1,i}$ contains a subset of size at least 
$(1-6\varepsilon)|V_{1,i}|$ having no edge of color $c$.
For every color $c \in C_1 \cup C_2$, we have
\[
	\left| \bigcup_{i\in J_c} V_{1,i}\right|  
	\ge \frac{1}{2} \left| \bigcup_{i\notin I_c} W_{1,i}\right|  
	\ge \frac{1}{2} \left( \vol(\mathcal{L}_c(V_1)) - 4\varepsilon m \right)
	\quad \text{or} \quad
	\left| \bigcup_{i\in J_c} V_{-1,i}\right| 
	\ge \frac{1}{2} \left( \vol(\mathcal{L}_c(V_{-1})) - 4\varepsilon m \right).
\]
Without loss of generality, we may assume that at least half of the $r$ colors that are not red nor blue
satisfies the former. Let $\mathcal{C}$ be the set of these colors (note that $|\mathcal{C}| \ge \frac{r}{2}$). We then have
\[
	\sum_{c \in \mathcal{C}} \sum_{i \,:\, i \in J_c} |V_{1,i}| 
	\ge \frac{r}{2} \cdot \frac{1}{2}\Big(\vol(\mathcal{L}_c(V_1)) - 4\varepsilon m\Big).
\]
Since
\[
	\sum_{c \in \mathcal{C}} \sum_{i \,:\, i \in J_c} |V_{1,i}|
	=
	\sum_{i} \sum_{c \,:\, i \in J_c} |V_{1,i}|
	\le
	\sum_{i} |V_{1,i}| \cdot \max_{i} |\{c : i \in J_c\}|,
\]
and $\sum_{i} |V_{1,i}| = \vol(\mathcal{L}_c(V_1)) \ge 8\varepsilon m$ we see that
\[
	\max_{i} |\{c : i \in J_c\}| 
	\ge \frac{r}{4} \cdot \frac{\vol(\mathcal{L}_c(V_1)) - 4\varepsilon m}{\vol(\mathcal{L}_c(V_1))}
	\ge \frac{r}{8}.
\]
In particular, there exists an index $\iota$ for which at least $\frac{r}{8}$ colors $c$ satisfy $\iota \in J_c$. Recall that $\iota\in J_c$ implies that there exists a subset of $V_{1,\iota}$ of size at least
$(1-6\varepsilon) |V_{1,\iota}|$ having no edge of color $c$. An application of Lemma \ref{lem:intersecting_sets_size} then gives a set $S$ of size 
$(1-12\varepsilon)^{\frac{r}{32}}|V_{1,\iota}| \ge (1-16\varepsilon)^{\frac{r}{4}}|V_{1,\iota}|$ and a set of $\mathcal{C}'$ of $\frac{r}{32}$ colors such that $S$ contains no edge of color $c$ for any $c\in \mathcal{C}'$.
Thus $S$ is a subset of size at least $\beta |V_{1,\iota}| \ge \beta \alpha_0 \alpha_1 m$ on which the coloring is $\left(\frac{31r}{32}, \frac{31r}{32}, 0, \varepsilon^2 \right)$-restricted.
\end{proof}

\begin{rem}
Note that in Case 2 above, instead of saying that the subgraph induced on $S$
has $\frac{31r}{32}$ colors with no restriction, 
we could have kept track of the previously $x$-sparse colors
to conclude that the subgraph induced on $S$ has some colors that are $\frac{x}{\beta\alpha_1 \alpha_0}$-sparse. However, we would not gain anything from such refined analysis,
since the lemma will later be used in the range $x \ge \alpha_1$. 
Since $\frac{x}{\beta \alpha_1 \alpha_0} \ge 1$ for such values of $x$, there is no advantage in
carrying out this more refined analysis.
\end{rem}

Lastly, we need to take care of the base case of our recursion, for which we have the following lemma:

\begin{lem}\label{lem:G_base_case} For every integer $R$, there exist constants $\gamma$ and $N_0$ such that
if $r$ and $x$ satisfy $x < e^{-(\log(1/\varepsilon)/\varepsilon)\log((r-R)\gamma)}$, then 
\[
	G(R, r, x, \varepsilon) \le N_0.
\]
\end{lem}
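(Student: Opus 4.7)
The plan is to show that the hypothesis on $x$ forces the $r-R$ sparse colors to collectively contribute a negligible fraction of the edges of $K_n$, so that one can find a large vertex subset on which only the $R$ non-sparse colors appear, and then invoke the (finite) constant $F_{\chi}(R,8,4)$ on that subset.

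First, I would translate sparsity into an explicit density bound via Lemma~\ref{lem:dense_pairs}. If $c$ is any $(x,\varepsilon)$-sparse color and $d_c>0$ is its edge density in $K_n$, then Lemma~\ref{lem:dense_pairs} produces a balanced $\varepsilon$-dense pair in the color-$c$ subgraph with parts of size at least $n\cdot d_c^{\log(1/\varepsilon)/\varepsilon}$; by sparsity this size is strictly less than $xn$, so $d_c<x^{\varepsilon/\log(1/\varepsilon)}$. Substituting the hypothesis $x<e^{-(\log(1/\varepsilon)/\varepsilon)\log((r-R)\gamma)}$ and simplifying yields $d_c<\tfrac{1}{(r-R)\gamma}$ for every sparse color. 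Because distinct color classes are edge-disjoint, the union $H$ of all sparse color classes satisfies $e(H)\le(r-R)\cdot\tfrac{1}{(r-R)\gamma}\binom{n}{2}=\tfrac{1}{\gamma}\binom{n}{2}$.

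Next, I would extract a large independent set in $H$. Its average degree is at most $n/\gamma$, so by the standard Tur\'an/Caro--Wei bound $H$ contains an independent set $S$ with $|S|\ge n/(n/\gamma+1)=n\gamma/(n+\gamma)$. I set $\gamma:=2F_{\chi}(R,8,4)$, which is a finite constant depending only on $R$ by the product upper bound $F_{\chi}(R,8,4)\le 1+7^{\lceil R/3\rceil}$, and $N_0:=\gamma$. Then for $n\ge N_0=\gamma$ one gets $|S|\ge\gamma/2=F_{\chi}(R,8,4)$. On $K_S$ no sparse color appears, so $K_S$ is edge-colored using at most the $R$ non-sparse colors; by the very definition of $F_{\chi}(R,8,4)$ this coloring already contains an $8$-chromatic subgraph receiving at most $3$ distinct colors, which is what we needed.

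The argument is essentially two lines of work, and the main obstacle is bookkeeping rather than a new idea: one must verify that $\varepsilon$ lies below the threshold $\varepsilon_0$ of Lemma~\ref{lem:dense_pairs}, handle the degenerate case $r=R$ (where there are no sparse colors and the bound follows directly from $F_{\chi}(R,8,4)\le N_0$), and note that colors of density $0$ are trivially sparse and contribute no edges to $H$. The constants $\gamma$ and $N_0$ are permitted to depend on $R$ (and on the ambient fixed $\varepsilon$), consistent with the statement of the lemma.
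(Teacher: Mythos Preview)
Your proof is correct and is essentially the paper's argument run in the contrapositive direction: the paper assumes the non-restricted colors span no $K_{F_\chi(R,8,4)}$, applies Tur\'an's theorem to force some restricted color to have density at least $\tfrac{1}{(r-R)\gamma}$, and then invokes Lemma~\ref{lem:dense_pairs} to contradict $(x,\varepsilon)$-sparsity, whereas you first use Lemma~\ref{lem:dense_pairs} to bound each restricted color's density below $\tfrac{1}{(r-R)\gamma}$ and then use Caro--Wei to extract an independent set (equivalently, a clique in the non-restricted colors) of size $F_\chi(R,8,4)$. The constants differ slightly (the paper takes $\gamma=F_\chi(R,8,4)$ and $N_0=\gamma(\gamma-1)$), but the two proofs are the same idea viewed from opposite ends.
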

\begin{proof}
Define $\gamma = F_\chi(R,8,4)$. Let $N = \gamma(\gamma-1)$ and
consider an $(R,r,x,\varepsilon)$-restricted chromatic-$(8,4)$-coloring of $K_N$.  
Call the $r-R$ colors that are $(x,\varepsilon)$-sparse as {\em restricted}, and
the other $R$ colors as {\em non-restricted}.
If the subgraph $H$ consisting of the edges of non-restricted colors
contains a copy of $K_\gamma$, then by definition, 
we can find $3$ colors whose union has chromatic number $8$. 
Therefore, $H$ does not contain a copy of $K_\gamma$, and thus 
by Tur\'an's theorem, $H$ has density at most $1 - \frac{1}{\gamma}$.
Then the complement of $H$ in $K_N$ has density at least $\frac{1}{\gamma}$. Since $H$
is $(r-R)$-colored with restricted colors, there exists a restricted color of density at least $\frac{1}{(r-R)\gamma}$ in $K_N$.
By Lemma~\ref{lem:dense_pairs}, we can find an $\varepsilon$-dense pair
with parts of size at least $e^{-(\log(1/\varepsilon)/\varepsilon)\log((r-R)\gamma)}N > x N$. However, this contradicts the fact that our color was a restricted color. 
\end{proof}

We can now combine our above results into an upper bound on $F_{\chi}(r,8,4)$. 

\begin{thm}\label{thm:p=3}
There exists a constant $C$ such that the following holds for all $r$:
\[
	F_{\chi}(r,8,4)\le  e^{Cr^{2/3}\log^4(r)}.
\] 
\end{thm}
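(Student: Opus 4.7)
The plan is to iterate Lemma~\ref{lem:G_upper_bound} starting from the trivially restricted state $(r,r,0,\varepsilon^2)$ and bound the total size decay along any resulting execution, terminating via Lemma~\ref{lem:G_base_case} or a trivial bound. I would fix $\varepsilon$ as a function of $r$; the natural choice is $\varepsilon = r^{-1/3}$ up to a logarithmic factor (for concreteness, $\varepsilon = r^{-1/3}\log^{1/2}(r)$), chosen to balance the three competing quantities $\beta^{-1}$, $\alpha_0^{-1}$, and $\alpha_1^{-1}$ appearing in the lemma.

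A chromatic-$(8,4)$-coloring of $K_n$ is $(r,r,0,\varepsilon^2)$-restricted since no color is required to be sparse, so we may apply Lemma~\ref{lem:G_upper_bound} repeatedly. Each application yields either Case 1, which shrinks $r_1$ by the factor $31/32$ while keeping $r$ fixed and replaces $x$ by $(\beta\alpha_0)^{-1}\max(\alpha_1,x)$, or Case 2, which resets $(r_1,r,x)$ to $(31r/32,31r/32,0)$. We iterate until either $r_1$ or $r$ drops below $R$: in the former case we invoke Lemma~\ref{lem:G_base_case} to get the constant bound $N_0$ (regarding the coloring as $(R,r,x,\varepsilon^2)$-restricted by promoting extra sparse colors to unrestricted), and in the latter case we apply the trivial bound $F_\chi(r,8,4) \le F_\chi(R,8,4) = O(1)$.

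The analysis has three pieces. First, the total iteration depth is $O(\log^2 r)$: there are at most $\log_{32/31}(r/R) = O(\log r)$ Case 2 steps overall (each shrinks $r$ by $31/32$), and between consecutive Case 2's there are at most $O(\log r)$ Case 1 steps (each shrinks $r_1$ by the same factor). Second, the worst-case per-step log-decay is $\log(\beta\alpha_0\alpha_1)^{-1} = O(r^{2/3}\log^2 r)$ with the chosen $\varepsilon$, dominated by $\log\alpha_1^{-1} = 100\log^2(r)r^{2/3}$; multiplying by the depth yields total log-decay $O(r^{2/3}\log^4 r)$, from which the theorem follows after absorbing the constant terminal bound. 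Third, and most delicate, we must verify Lemma~\ref{lem:G_base_case}'s hypothesis on $x$ whenever we invoke it: after a maximal chain of $K = O(\log r)$ Case 1 steps starting from $x=0$, the sparsity equals $x_K = (\beta\alpha_0)^{-K}\alpha_1$, and with our $\varepsilon$ the exponent $K\log(\beta\alpha_0)^{-1}$ evaluates to roughly $(64/3)\log^2(r)r^{2/3}$ while $\log\alpha_1^{-1} = 100\log^2(r)r^{2/3}$, so $\log x_K \le -\Omega(r^{2/3}\log^2 r)$, well below the threshold $-(\log(1/\varepsilon)/\varepsilon)\log((r-R)\gamma) = -O(r^{1/3}\log^{3/2}r)$.

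The hard part is exactly this last verification: $\varepsilon$ must lie in a narrow window around $r^{-1/3}$ so that $\beta^{-1} \sim e^{4r\varepsilon}$ and $\alpha_0^{-1} = r^{\log(1/\varepsilon^2)/\varepsilon^2}$ are individually small enough, yet their $K$-fold product still loses to $\alpha_1^{-1}$. The exponent $2/3$ in the theorem is essentially pinned down by this balance; a choice of $\varepsilon$ much outside the window would either inflate the per-step decay or violate the base case hypothesis. Once this $\varepsilon$ is fixed, the bookkeeping of depth and per-step decay is routine and delivers the stated bound.
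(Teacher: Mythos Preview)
Your approach is essentially identical to the paper's: choose $\varepsilon \sim r^{-1/3}$ up to logarithms (the paper takes $\varepsilon = r^{-1/3}\log^{2/3} r$, you take $\log^{1/2} r$; either works), iterate Lemma~\ref{lem:G_upper_bound} with at most $O(\log r)$ Case~2 steps and at most $O(\log r)$ consecutive Case~1 steps, verify the hypothesis of Lemma~\ref{lem:G_base_case} at termination, and multiply out the shrinkage factors. One minor slip: since the restricted coloring uses parameter $\varepsilon^2$, when invoking Lemma~\ref{lem:G_base_case} you must substitute $\varepsilon^2$ for that lemma's $\varepsilon$, so the correct threshold is $e^{-(\log(1/\varepsilon^2)/\varepsilon^2)\log((r-R)\gamma)} = e^{-\Theta(r^{2/3}\log r)}$ rather than $e^{-O(r^{1/3}\log^{3/2} r)}$---but your bound $\log x_K \le -\Omega(r^{2/3}\log^2 r)$ still comfortably clears it.
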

\begin{proof}
Define $\varepsilon_0 = (\varepsilon_0)_{\ref{lem:dense_pairs}}$ and
let $\varepsilon = r^{-1/3}\log^{2/3}r$.
Let $R > R_{\ref{lem:G_upper_bound}}$ be large enough
so that for all $r \ge R$, we have $\varepsilon < \varepsilon_0$. 
Let $\gamma$ and $N_0$ be the constants from Lemma~\ref{lem:G_base_case} for this value of $R$.
It suffices to prove the theorem for $r \ge R$, since then we can adjust the value of $C$
so that the conclusion holds for all values of $r$. 
Let $\alpha_1=e^{-100\log^{2}(r)r^{2/3}}, \alpha_0=r^{-\log(1/\varepsilon^2)/\varepsilon^2},$ and $\beta=(1-16\varepsilon)^{\frac{r}{4}}$.

Recall that $F_{\chi}(r,8,4)=G(r,r,0,\varepsilon^2)$. 
Define $r_{1,0} = r_{2,0} = r$, $x_{0} = 0$ and $n_0 = G(r,r,0,\varepsilon^2)-1$.
We start with a $(r_{1,0},r_{2,0},x_{0},\varepsilon^2)$-restricted chromatic-$(8,4)$-coloring of a complete
graph on $n_0$ vertices
and repeatedly apply Lemma \ref{lem:G_upper_bound} to reduce the number of colors. 
Suppose that at the $i$-th step we are given a $(r_{1,i},r_{2,i}, x_i, \varepsilon^2)$-restricted coloring on $n_i$ vertices,.
After applying Lemma~\ref{lem:G_upper_bound}, there are two possibilities depending on whether Case 1 or Case 2 of the lemma applies.
If Case $1$ applies, then we obtain a $(\frac{31}{32}r_{1,i}, r_{2,i}, \beta^{-1}\alpha_0^{-1}\max(\alpha_1, x),\varepsilon^2)$-restricted coloring 
on $n_{i+1} = \beta \alpha_0 n$ vertices.
If Case $2$ applies, then we obtain a $(\frac{31}{32}r_{2,i}, \frac{31}{32}r_{2,i}, 0,\varepsilon^2)$-restricted coloring 
on $n_{i+1} = \beta \alpha_1 \alpha_0 n$ vertices.

Repeat the above until the first time $T$ we have $r_{1,T} < R$ and let $x = x_T$.
Note that there are at most $\log_{32/31}(r)$ iterations of Case $2$, and at most $\log_{32/31}(r)$ consecutive iterations of Case $1$. 
It suffices to prove that $x$ is sufficiently small when the process ends, so that the condition of Lemma \ref{lem:G_base_case} is satisfied with $\varepsilon_{\ref{lem:G_base_case}} = \varepsilon^{2}$.
When Case $2$ applies, the density restriction factor $x$ is reset to $0$. So we only need to check the conditions when we have $\log_{32/31}(r)$ consecutive applications of Case $1$. On the first iteration, the density factor is $\beta^{-1}\alpha_0^{-1}\alpha_1$ and each subsequent iteration adds a multiplicative factor of $\beta^{-1}\alpha_0^{-1}$. 
Thus for $c = \frac{1}{\log(32/31)}$ and some positive constant $c'$, we have
\begin{eqnarray*}
x \le 
\alpha_1(\beta^{-1}\alpha_0^{-1})^{c\log(r)}&=& \alpha_1\left(r^{\log(1/\varepsilon^2)/\varepsilon^2}\right)^{c\log(r)} (1-16\varepsilon)^{\frac{-rc\log(r)}{4}}\\
&<&\alpha_1\left(e^{\log(r)\log(1/\varepsilon^2)/\varepsilon^2}\right)^{c\log(r)} e^{-16\varepsilon \frac{-r c\log(r)}{4}}\\
&<& e^{-100\log^2(r) r^{2/3}} e^{c' r^{2/3} \log^{5/3}(r)}\\
&<& e^{-\log(1/\varepsilon^2)/\varepsilon^2 \log(r\gamma)},
\end{eqnarray*}
as desired.

Suppose we applied Case $1$ $t$ times and Case $2$ $s$ times before reaching time $T$. 
We have seen above that $t < (\log_{32/31}(r))^2$ and $s < \log_{32/31}(r)$.
Therefore there exist positive constants $C', C'', C$ such that
\begin{align*}
	F_{\chi}(r,8,4) = G(r,r,0, \varepsilon^2)
	&\le N_0\alpha_0^{-t-s} \beta^{-t-s}\alpha_1^{-s} \\
	&\le N_0e^{C'r^{2/3}\log^{11/3}(r)}e^{C''\log^{3}(r)r^{2/3}}\\
	&\le e^{Cr^{2/3}\log^4(r)}. \qedhere
\end{align*}
\end{proof}

\section{Upper Bound on $F_{\chi}(r, 2^q, q+1)$} \label{sec:q}

Throughout this section, we fix a natural number $q \ge 2$.

\subsection{Well-balanced colors}

In this subsection, we generalize the definitions used in the previous section. 
For technical reasons, we need a slightly different definition of sparsity. 

\begin{dfn}
Let $x,\varepsilon \in [0,1]$ be real numbers. Given an edge-coloring of $K_n$, we say that a color $c$ is \emph{$(x,\varepsilon)$-sparse} if the subgraph consisting of edges of color $c$
has no balanced $\varepsilon$-dense pair with parts of size between $xn$ and $\varepsilon n$. 
\end{dfn}

This definition is not much more restricted than the one given in the previous section.

\begin{lem} \label{lem:mono}
Suppose that a color $c$ is $(x,\varepsilon)$-sparse in an edge coloring of $K_n$. 
Then for every vertex-subset $S$ of size $|S| \ge \gamma n$, the color $c$ is $(\gamma^{-1}x,\varepsilon)$-sparse
in the induced subgraph $K_n[S]$.
\end{lem}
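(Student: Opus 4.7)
The plan is to prove the contrapositive by lifting a witnessing dense pair from $K_n[S]$ up to $K_n$. Suppose, toward contradiction, that color $c$ fails to be $(\gamma^{-1}x,\varepsilon)$-sparse in $K_n[S]$. Then by definition there is a balanced $\varepsilon$-dense pair $(V_1,V_2)$ of color $c$ inside $K_n[S]$ whose common part size $t := |V_1| = |V_2|$ satisfies $\gamma^{-1}x\,|S| \le t \le \varepsilon\,|S|$. Since $V_1, V_2 \subseteq S \subseteq V(K_n)$, the pair $(V_1,V_2)$ is literally a pair of vertex subsets of $K_n$, and $\varepsilon$-denseness is an intrinsic property of the two parts and the color-$c$ edges between them — it makes no reference to the ambient complete graph. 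So $(V_1,V_2)$ remains an $\varepsilon$-dense color-$c$ pair when viewed inside $K_n$.

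It then suffices to check that $t$ lands in the forbidden size interval $[xn,\varepsilon n]$ prescribed by the sparsity of $c$ in $K_n$. Using $|S| \ge \gamma n$ gives $t \ge \gamma^{-1} x \, |S| \ge xn$, and using $|S| \le n$ gives $t \le \varepsilon\,|S| \le \varepsilon n$. This produces a balanced $\varepsilon$-dense color-$c$ pair in $K_n$ with part sizes in $[xn,\varepsilon n]$, directly contradicting the assumption that $c$ is $(x,\varepsilon)$-sparse in $K_n$. There is no real obstacle here: the $\varepsilon$-density condition transfers for free because it is a local property of the pair, and the only (trivial) thing to track is that the new size interval $[\gamma^{-1} x\, |S|, \varepsilon\, |S|]$ sits inside the old one $[xn, \varepsilon n]$, which follows from the two-sided bound $\gamma n \le |S| \le n$.
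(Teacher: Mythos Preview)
Your proof is correct and essentially identical to the paper's own argument: both verify that the size interval $[\gamma^{-1}x\,|S|,\varepsilon\,|S|]$ is contained in $[xn,\varepsilon n]$ using $\gamma n \le |S| \le n$, from which the claim follows immediately (you phrase it as a contrapositive, the paper phrases it directly, but the content is the same).
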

\begin{proof}
In the induced subgraph $K_n[S]$, there are no balanced $\varepsilon$-dense pairs in color $c$ with parts of size between $xn$ and $\varepsilon n$. 
The conclusion follows since $[\gamma^{-1}|S|, \varepsilon|S|] \subseteq [xn, \varepsilon n]$.
\end{proof}

As in the previous section, we will bound $F_{\chi}$ by using a recursive formula.
This time, we keep track of the number of sparse colors with various different sparsity conditions. 

\begin{dfn}
Let $\vec{r}=(r_1, \ldots, r_{q-1})$ be a sequence of non-decreasing non-negative integers  
and $\vec{x}=(x_1, \ldots, x_{q-1})$ be a sequence of non-negative real numbers. 
For a positive real number $\varepsilon$, an edge-coloring of $K_n$ is {\em $(\vec{r},\vec{x}, \varepsilon)$-restricted} if there exist disjoint sets of colors $C_1 \cup \cdots \cup C_{q-1}$ such that for each $i \in [q-1]$, $|C_i|=r_i-r_{i-1}$ (where $r_0 = 0$), and each color in $C_i$ is $(x_i,\varepsilon)$-sparse. 
\end{dfn}

We will always use $x_1 =1$ so that there are $r_1$ colors that are not restricted.
We define $G(\vec{r},\vec{x}, \varepsilon)$ to be the minimum $n$ such that every $(\vec{r},\vec{x}, \varepsilon)$-restricted coloring of $K_n$ contains a $2^q$-chromatic subgraph receiving at most $q$ distinct colors on its edges. Then $F_{\chi}(r,2^q, q+1)=G((r,r,\ldots ,r), (1,0,\ldots,0), \varepsilon)$ for any $\varepsilon$. 

Consider a $(\vec{r},\vec{x}, \varepsilon)$-restricted coloring of $K_n$.
As in the previous section, we take a densest color $c_1$ and
a balanced $\varepsilon$-dense pair $W_1 = V_1\cup V_{-1}$ in the color $c_1$,
where $|V_1| = |V_{-1}| \ge \alpha_0 n$ for some real number $\alpha_0$ to be defined later. 
We refer to $c_1$ as the first level color, and define $\mathcal{L}_1 = \{V_1, V_{-1}\}$.
Suppose that for some $k \in [q-2]$, we are given colors $c_1, \ldots, c_k$ 
 with $k$-th level sets $\mathcal{L}_{k} =\bigcup_{a_1,\ldots, a_{k}} \{V_{a_1, \ldots, a_{k}}, V_{a_1, \ldots, -a_{k}}\}$,
where the sets are paired into $W_{a_1, \ldots, a_{k}} = V_{a_1, \ldots, a_{k}} \cup V_{a_1, \ldots, -a_{k}}$
forming balanced $\varepsilon$-dense pairs in color $c_{k}$.
For a color $c_{k+1}$, we construct the $(k+1)$-th level sets by taking balanced $\varepsilon$-dense pairs of color $c_{k+1}$
in each $V_{\vec{a}} \in \mathcal{L}_{k}$ as follows. 
Take a maximal collection of vertex-disjoint $\varepsilon$-dense pairs $W_{\vec{a}, a_{k+1}} = V_{\vec{a}, a_{k+1}} \cup V_{\vec{a}, -a_{k+1}}$ in $V_{\vec{a}}$ consisting of edges of color $c_{k+1}$
and having sizes $\alpha_k |V_{\vec{a}}| \le |V_{\vec{a}, a_{k+1}}| \le \varepsilon |V_{\vec{a}}|$ for some parameter $\alpha_{k}$ to be defined later. 
Define $\mathcal{L}(V_{\vec{a}}) = \bigcup_{a_{k+1}}\{V_{\vec{a}, a_{k+1}}, V_{\vec{a}, -a_{k+1}}\}$ and
$\mathcal{L}_{k+1} = \bigcup_{V \in \mathcal{L}_{k} } \mathcal{L}(V)$.
Take a subfamily if necessary so that $\vol(\mathcal{L}(V_{\vec{a}})) \le 2^{q+3}\varepsilon |V_{\vec{a}}|$
for each $V_{\vec{a}} \in \mathcal{L}_k$.
Hence
\begin{align} \label{eq:vol_ub}
	\vol(\mathcal{L}_{k+1}) 
	= \sum_{V_{\vec{a}} \in \mathcal{L}_k} \vol(\mathcal{L}(V_{\vec{a}}))
	\le 2^{q+3}\varepsilon \vol(\mathcal{L}_k).
\end{align}
The following definition provides a threshold for a set containing `enough' dense pairs 
in the next level.

\begin{dfn}
For $k \ge 1$, we say that $V \in \mathcal{L}_k$ is {\em properly-shattered} if 
$2^{q+2}\varepsilon|V| \le \vol(\mathcal{L}(V)) \le 2^{q+3}\varepsilon|V|$. 
Let $\mathcal{N}_k \subseteq \mathcal{L}_k$ be the family of sets that are not 
properly-shattered.
\end{dfn}

The following lemma shows that a non-properly-shattered set contains a large subset on
which $c_{k+1}$ is sparse. 

\begin{lem} \label{lem:nonshatter}
If $V \in \mathcal{L}_k$ is not properly-shattered, then there exists $V' \subseteq V$
of size at least $|V'| \ge \Big(1 -  2^{q+2}\varepsilon\Big)|V|$
that contains no balanced $\varepsilon$-dense pair of size between
$\alpha_{k}|V|$ and $\varepsilon |V|$.
\end{lem}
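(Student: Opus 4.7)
The plan is to show that the lemma follows essentially from the maximality that was built into the construction of $\mathcal{L}(V)$, together with a short sanity check that the subfamily-trimming step in that construction did not actually discard anything in this case. First I would observe that the construction always guarantees the upper bound $\vol(\mathcal{L}(V)) \le 2^{q+3}\varepsilon|V|$, so the hypothesis that $V$ is not properly-shattered is equivalent to the strict inequality $\vol(\mathcal{L}(V)) < 2^{q+2}\varepsilon|V|$.

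Next, I would argue that the subfamily-trimming step was vacuous for such a $V$. Before any trimming we had a maximal vertex-disjoint family of balanced $\varepsilon$-dense pairs of color $c_{k+1}$ in $V$ with part sizes in $[\alpha_k|V|, \varepsilon|V|]$, and each single pair contributes at most $2\varepsilon|V|$ to the total volume. If trimming had been necessary, then removing pairs one at a time until the volume first drops below the threshold $2^{q+3}\varepsilon|V|$ would leave volume at least $2^{q+3}\varepsilon|V| - 2\varepsilon|V| \ge 2^{q+2}\varepsilon|V|$, placing $V$ in the properly-shattered regime and contradicting the hypothesis. Hence $\mathcal{L}(V)$ itself is already a maximal such family.

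With the maximality of $\mathcal{L}(V)$ established, the conclusion is immediate. I would set $V' := V \setminus \bigcup_{W \in \mathcal{L}(V)} W$, so that
\[
    |V'| \;\ge\; |V| - \vol(\mathcal{L}(V)) \;\ge\; (1 - 2^{q+2}\varepsilon)|V|,
\]
as required. Any balanced $\varepsilon$-dense pair of color $c_{k+1}$ inside $V'$ with parts of size in $[\alpha_k|V|, \varepsilon|V|]$ would be vertex-disjoint from every pair already in $\mathcal{L}(V)$ and could be appended to the collection, contradicting its maximality. The only cosmetic point worth noting is that the part-size window is measured against $|V|$ rather than $|V'|$, but this causes no trouble since $|V'| \le |V|$. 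I do not expect a real obstacle here; the main (and essentially only) conceptual step is recognizing that the trimming step in the construction is forced to be a no-op whenever the non-properly-shattered condition holds, after which the rest is routine bookkeeping.
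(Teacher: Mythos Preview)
Your proposal is correct and follows essentially the same argument as the paper's proof: both hinge on the observation that each dense pair contributes at most $2\varepsilon|V|$ to the volume, so trimming cannot overshoot the $2^{q+2}\varepsilon|V|$ threshold, forcing $\mathcal{L}(V)$ to coincide with the original maximal family. The only difference is organizational---the paper phrases it as a case analysis on $\vol(\mathcal{W})$, while you first deduce $\vol(\mathcal{L}(V)) < 2^{q+2}\varepsilon|V|$ and then work backwards---but the content is identical.
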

\begin{proof}
Let $\mathcal{W}$ be the maximal family of vertex-disjoint 
balanced $\varepsilon$-dense pairs in $V$ of color $c_{k+1}$ 
with parts of size between $\alpha_k |V|$ and $\varepsilon|V|$.
If $\vol(\mathcal{W}) < 2^{q+2} \varepsilon |V|$, then the conclusion holds
since we can take $V' = V \setminus \bigcup_{W \in \mathcal{W}} W$. 
Otherwise, since $V$ is not properly-shattered, we must have $\vol(\mathcal{W}) > 2^{q+3}\varepsilon |V|$.
In this case $\mathcal{L}(V)$ is obtained by repeatedly removing a balanced dense
pairs from $\mathcal{W}$ until the first time we reach $\vol(\mathcal{W}) < 2^{q+3} \varepsilon |V|$. 
Since each dense pair consists of parts of sizes at most $\varepsilon|V|$, we see that the final family
has volume at least $2^{q+3}\varepsilon|V| - 2\varepsilon|V| > 2^{q+2}\varepsilon|V|$, showing that $V$ is properly-shattered.
\end{proof}

Throughout the process, we will use a different analysis depending on
whether there are enough properly-shattered sets $V \in \mathcal{L}_k$.
This can be considered as the analogue of Cases 1 and 2 of Lemma~\ref{lem:G_upper_bound}.
For technical reasons, we say that $c_1$ is {\em well-balanced}.

\begin{dfn}
For $k \ge 1$, we say that $(c_1, \ldots, c_{k+1})$ is well-balanced if 
$(c_1, \ldots, c_{k})$ is well-balanced, and
$\vol(\mathcal{N}_{k}) \le 2^{-4(q-1)}\vol(\mathcal{L}_{k})$.
Also, we say that an edge-coloring of a complete graph is well-balanced up to the $k$-th level if there exists a sequence of colors $c_1, \ldots, c_k$ such that $(c_1, \ldots, c_k)$ is well-balanced.
\end{dfn}

\subsection{Recursion}

The proof of Theorem~\ref{thm:main} uses a recursive formula obtained by considering the 
maximum level to which the given coloring is well-balanced.
Let $\varepsilon = r^{-1/q} \log r$ and define
\begin{align*}
\delta=e^{-(1/\varepsilon^{q-1})\log{r}},\quad
z=\frac{2^{4q}}{2^{4q}+1},\,\quad
y=\log_{1/z}(r),\quad \textrm{and}\quad
\beta=(1-2^{q+3}\varepsilon)^{\frac{r}{2^{4q}}}.
\end{align*}
For $i =0,1,\ldots, q-2$, define
\begin{equation*}
\gamma_i = \beta \alpha_0 \cdots \alpha_{i} \quad \textrm{and} \quad
\alpha_i = \beta^{-1} (\beta\delta)^{(3y)^i}.
\end{equation*}
Apply the framework
of the previous subsection with this choice of $\{\alpha_i\}_{i=0}^{q-2}$ and $\varepsilon^{q-1}$ instead of $\varepsilon$.

\begin{lem}\label{lem:not_balanced}
There exists $R \in \mathbb{N}$ such that the following holds for all $k < q-1$.
Let $\vec{r}= (r_1, \ldots, r_{q-1})$ be a sequence of non-decreasing non-negative integers satisfying $R \le r_k \le r$ and
let $\vec{x} = (x_1, \ldots, x_{q-1})$ be a sequence of non-negative real numbers.
If a $(\vec{r}, \vec{x}, \varepsilon^{q-1})$-restricted coloring of $K_n$
is well-balanced up to the $k$-th level, but not the $(k+1)$-th level, then
there exists a subset of at least $\gamma_{k-1}n$ vertices on which the coloring is
$(\vec{r}', \vec{x}',\varepsilon^{q-1})$-restricted, where 
\begin{align*}
\vec{r}'&=\left(z r_k , \ldots, z r_k, r_{k+1}, r_{k+2}, \ldots, r_{q-1}\right)\\
\vec{x}'&=\Big(1,0,\ldots , 0, \gamma_{k-1}^{-1}\max(\alpha_k,x_{k+1}) ,\gamma_{k-1}^{-1}x_{k+2}, \ldots , \gamma_{k-1}^{-1}x_{q-1}\Big).
\end{align*}
\end{lem}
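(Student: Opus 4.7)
The plan is to exploit the hypothesized failure at level $k+1$ via double-counting and then extract a substantial vertex subset using Lemma~\ref{lem:intersecting_sets_size}. First I would fix a sequence $(c_1,\ldots,c_k)$ witnessing well-balancedness up to level $k$ and construct the nested families $\mathcal{L}_1,\ldots,\mathcal{L}_k$ from the framework of the previous subsection. For each color $c$, write $\mathcal{N}_k(c)\subseteq\mathcal{L}_k$ for the family of sets that fail to be properly shattered when $c$ is adjoined as $c_{k+1}$. Since by hypothesis no extension $(c_1,\ldots,c_k,c)$ is well-balanced and $(c_1,\ldots,c_k)$ already is, we must have $\vol(\mathcal{N}_k(c))>2^{-4(q-1)}\vol(\mathcal{L}_k)$ for every color $c$. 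Summing over the $r_k$ colors in $C_1\cup\cdots\cup C_k$ and swapping the order of summation gives
\[
\sum_{V\in\mathcal{L}_k}|V|\cdot|\{c\in C_1\cup\cdots\cup C_k:V\in\mathcal{N}_k(c)\}|>r_k\cdot 2^{-4(q-1)}\vol(\mathcal{L}_k),
\]
so averaging with weights $|V|$ yields some $V^{*}\in\mathcal{L}_k$ and a subfamily $\mathcal{C}\subseteq C_1\cup\cdots\cup C_k$ of at least $r_k\cdot 2^{-4(q-1)}$ colors that each fail to properly shatter $V^{*}$.

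For each $c\in\mathcal{C}$, Lemma~\ref{lem:nonshatter} produces a subset $V_c'\subseteq V^{*}$ of size at least $(1-2^{q+2}\varepsilon^{q-1})|V^{*}|$ containing no balanced $\varepsilon^{q-1}$-dense pair of color $c$ with parts in $[\alpha_k|V^{*}|,\varepsilon^{q-1}|V^{*}|]$. Applying Lemma~\ref{lem:intersecting_sets_size} (with tolerance $2^{q+2}\varepsilon^{q-1}$) to $\{V_c'\}_{c\in\mathcal{C}}$ viewed as subsets of $V^{*}$ extracts a sub-collection $I\subseteq\mathcal{C}$ of size $|\mathcal{C}|/4\ge r_k/2^{4q-2}$ together with $S:=\bigcap_{c\in I}V_c'$ of size at least $(1-2^{q+3}\varepsilon^{q-1})^{|\mathcal{C}|/4}|V^{*}|$. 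Because $V^{*}\in\mathcal{L}_k$ automatically satisfies $|V^{*}|\ge\alpha_0\cdots\alpha_{k-1}n$ and the parameter choices will be tuned so that $(1-2^{q+3}\varepsilon^{q-1})^{|\mathcal{C}|/4}\ge\beta$, we conclude $|S|\ge\beta\alpha_0\cdots\alpha_{k-1}n=\gamma_{k-1}n$.

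Because $|I|\ge r_k/2^{4q-2}\ge(1-z)r_k$, pick $C^{*}\subseteq I$ of size exactly $(1-z)r_k$ and redefine the color classes on $S$ via $C_1':=(C_1\cup\cdots\cup C_k)\setminus C^{*}$ (of cardinality $zr_k$), $C_j':=\emptyset$ for $2\le j\le k$, $C_{k+1}':=C_{k+1}\cup C^{*}$, and $C_j':=C_j$ for $j>k+1$. Lemma~\ref{lem:mono} immediately upgrades each previously $(x_j,\varepsilon^{q-1})$-sparse color to $(\gamma_{k-1}^{-1}x_j,\varepsilon^{q-1})$-sparsity on $S$. For each $c\in C^{*}$, the inclusion $S\subseteq V_c'$ forbids any dense pair of color $c$ in $S$ with parts in $[\alpha_k|V^{*}|,\varepsilon^{q-1}|V^{*}|]$, and the trivial bound $|V^{*}|\le|S|/\gamma_{k-1}$ translates this into $(\gamma_{k-1}^{-1}\alpha_k,\varepsilon^{q-1})$-sparsity on $S$; combined with the old $C_{k+1}$-colors this gives the target sparsity $\gamma_{k-1}^{-1}\max(\alpha_k,x_{k+1})$ uniformly over $C_{k+1}'$. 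The principal technical obstacle is the parameter reconciliation invoked in the second paragraph, namely $(1-2^{q+3}\varepsilon^{q-1})^{r_k/2^{4q-2}}\ge\beta=(1-2^{q+3}\varepsilon)^{r/2^{4q}}$; after taking logarithms this reduces to roughly $4r_k\varepsilon^{q-2}\lesssim r$, which for $q\ge 3$ and $\varepsilon=r^{-1/q}\log r$ holds with ample slack.
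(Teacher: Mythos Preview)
Your argument is correct and follows essentially the same route as the paper: double-count $\vol(\mathcal{N}_k(c))$ over $c\in C_1\cup\cdots\cup C_k$ to locate a single $V^*\in\mathcal{L}_k$ that is non-properly-shattered for many colors, invoke Lemma~\ref{lem:nonshatter} to get large subsets $V_c'$, intersect via Lemma~\ref{lem:intersecting_sets_size}, and then reassign color classes using Lemma~\ref{lem:mono}. One small imprecision: you apply Lemma~\ref{lem:intersecting_sets_size} to all of $\mathcal{C}$, so the exponent in the size bound for $S$ is $|\mathcal{C}|/4$, which may be as large as $r_k/4$ rather than the value $r_k/2^{4q-2}$ you use in the final parameter check; the clean fix (which the paper adopts) is to first trim $\mathcal{C}$ to exactly $r_k/2^{4q-2}$ colors before intersecting, after which your inequality $(1-2^{q+3}\varepsilon^{q-1})^{r_k/2^{4q-2}}\ge\beta$ is the right one, and your asymptotic verification goes through unchanged.
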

\begin{proof}
Let $R = 2^{4q} R_{\ref{lem:intersecting_sets_size}}$.
Denote the set of colors as $C_1\cup \cdots \cup C_{q-1}$, where for each $i$, $|C_i| = r_i - r_{i-1}$ and
each color in $C_i$ is $(x_i, \varepsilon^{q-1})$-sparse. 
By assumption, we have colors $(c_1, \ldots, c_k)$ that are well-balanced but for all colors
$c$, the sequence of colors $(c_1, \ldots, c_k, c)$ is not well-balanced.
Hence if we define $\mathcal{N}_c$ as the family of non-properly-scattered sets $\mathcal{N}_k \subseteq \mathcal{L}_k$
obtained by considering the sequence of colors $(c_1, \ldots, c_{k}, c)$, then
$\vol(\mathcal{N}_c) > 2^{-4(q-1)}\vol(\mathcal{L}_{k})$.
Define $\mathcal{C} = C_1 \cup \cdots \cup C_k$. Then
\[
	\sum_{c \in \mathcal{C}} \vol(\mathcal{N}_c)
	> |\mathcal{C}| \cdot 2^{-4(q-1)}\vol(\mathcal{L}_{k}).
\]
On the other hand,
\[
	\sum_{c \in \mathcal{C}} \vol(\mathcal{N}_c)
	= \sum_{V \in \mathcal{L}_{k}} |V| \cdot |\{c \in \mathcal{C} \,: \, V \in \mathcal{N}_c\} |
	\le \vol(\mathcal{L}_{k}) \cdot \max_{V \in \mathcal{L}_{k}} |\{c \in \mathcal{C} \,: \, V \in \mathcal{N}_c\} |.
\]
Hence there exists $V \in \mathcal{L}_{k}$ such that $V \in \mathcal{N}_c$ for at least 
$2^{-4(q-1)}|\mathcal{C}| = 2^{-4q+4}r_k$ colors $c \in \mathcal{C}$.
Let $\mathcal{C}'$ be an arbitrary subset of $2^{-4q+2}r_k$ of these colors.
Since $V \in \mathcal{L}_k$, we have $|V| \ge \alpha_0 \cdots \alpha_{k-1}n$. 

Note that by Lemma~\ref{lem:nonshatter}, 
for each $c \in \mathcal{C}'$, there exists a subset $V_c \subseteq V$ of size at least
$|V_c| \ge (1 - 2^{q+2}\varepsilon^{q-1})|V|$ that does not contain an 
$\varepsilon^{q-1}$-dense pair of size between $\alpha_k|V|$ and $\varepsilon^{q-1}|V|$.
By Lemma \ref{lem:intersecting_sets_size}, there exists a set $S$ of size $(1-2^{q+3}\varepsilon^{q-1})^{\frac{r_k}{2^{4q}}} |V| \ge \beta |V| \ge \gamma_{k-1} n$ in which $\frac{r_k}{2^{4q}}$ colors do not contain an $\varepsilon^{q-1}$-dense pair of size between $\alpha_k|V|$ and $\varepsilon^{q-1}|V|$. For simplicity we take $\frac{r_k}{2^{4q}+1}$ of these colors, denoting the set as $C^*$.
Consider the coloring of the subgraph $K_S$ induced on $S$. 
Define $C_1' = C_1 \cup \ldots \cup C_k$, $C_2'=\ldots=C_k' = \emptyset$,
$C_{k+1}' = C_{k+1} \cup C^*$, and $C_j' = C_j$ for all $j > k+1$.
Note that in $K_S$, by Lemma~\ref{lem:mono}, the colors in $C_{k+1}'$ are $(\gamma_{k-1}^{-1}\max(\alpha_k, x_{k+1}), \varepsilon^{q-1})$-sparse, and for all $j > k+1$, the colors in $C_{j}'$ are $(\gamma_{k-1}^{-1}x_j, \varepsilon^{q-1})$-sparse.
Further, we have $|C_1'| = zr_k$, $|C_{k+1}'| = r_{k+1} - zr_k$, and $|C_{j}'| = |C_j|$ for all $j > k+1$. Therefore when restricted to $K_S$, the coloring 
is $(\vec{r}', \vec{x}', \varepsilon^{q-1})$-restricted, proving the lemma (where the definitions of $\vec{r}', \vec{x}'$ are given in the statement).
\end{proof}

The following lemma handles the case when the given coloring is well-balanced up to the final, $(q-1)$-th,
level.

\begin{lem}\label{lem:G_balanced}
There exists $R \in \mathbb{N}$ such that the following holds.
Let $\vec{r}= (r_1, \ldots, r_{q-1})$ be a sequence of non-decreasing non-negative integers satisfying $R \le r_{q-1} \le r$ and
let $\vec{x} = (x_1, \ldots, x_{q-1})$ be a sequence of non-negative real numbers.
Suppose that a $(\vec{r}, \vec{x}, \varepsilon^{q-1})$-restricted chromatic-$(2^{q},q+1)$-coloring of $K_n$
is well-balanced up to the $(q-1)$-th level. Then
there exists a subset of at least $\gamma_{q-2}n$ vertices on which the coloring is
$\left(\left(zr_{q-1},\ldots, zr_{q-1}\right),(1,0,\ldots,0),\varepsilon^{q-1}\right)$-restricted.
\end{lem}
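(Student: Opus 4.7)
My plan is to generalize Case~2 of the proof of Lemma~\ref{lem:G_upper_bound} by cascading its argument through all $q-1$ levels of the well-balanced structure. Denote by $\mathcal{T}$ the set of colors other than $c_1,\ldots,c_{q-1}$, so that $|\mathcal{T}|\ge r_{q-1}-(q-1)$. For each $c\in\mathcal{T}$, the chromatic-$(2^q,q+1)$ hypothesis applied to $c_1,\ldots,c_{q-1},c$ yields a partition $V(K_n)=A_1^{(c)}\cup\cdots\cup A_{2^q-1}^{(c)}$ into $2^q-1$ independent sets. The key structural observation, as in the $q=3$ case, is that for every level-$k$ pair $V_+\cup V_-$ dense in color $c_k$, the two sides' \emph{heavy} classes (those $A_j^{(c)}$ meeting each side in more than an $\varepsilon^{q-1}$ fraction) must be disjoint, since otherwise $\varepsilon^{q-1}$-density would produce a $c_k$-edge inside some $A_j^{(c)}$, contradicting its independence.

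I then perform a cascading descent for $k=0,1,\ldots,q-2$, building a chain $V^{(0)}\supset V^{(1)}\supset\cdots\supset V^{(q-2)}$ with $V^{(k)}\in\mathcal{L}_k$ and a shrinking sub-family $\mathcal{T}_0\supseteq\mathcal{T}_1\supseteq\cdots\supseteq\mathcal{T}_{q-2}$ with $|\mathcal{T}_k|\ge 2^{-O(k)}|\mathcal{T}|$, maintaining the invariant that for every $c\in\mathcal{T}_k$ the set $V^{(k)}$ concentrates its mass in at most $2^{q-k}-1$ of the classes $\{A_j^{(c)}\}$. Well-balancedness guarantees that $V^{(k)}$ is properly-shattered along the chain (only a $2^{-4(q-1)}$-fraction of each $\mathcal{L}_k$ lies in $\mathcal{N}_k$), so its children pairs have volume at least $2^{q+2}\varepsilon^{q-1}|V^{(k)}|$. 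For each $c\in\mathcal{T}_k$, I split these pairs into \emph{good} and \emph{bad} according to whether they leak substantial mass into $V^{(k)}$'s non-heavy classes; a double-counting argument analogous to the bound $|\bigcup_{i\in I_c}W_{1,i}|\le 4\varepsilon m$ from Case~2 of Lemma~\ref{lem:G_upper_bound} controls the bad volume. In each good pair, the disjoint-heavy-sets property from the $\varepsilon^{q-1}$-density of $c_{k+1}$ forces one of the two sides to have at most $\lfloor(2^{q-k}-1)/2\rfloor=2^{q-k-1}-1$ heavy classes. A volume-weighted double count over choices of $(c,\text{pair index},\text{side})$ then selects a popular $V^{(k+1)}=V^{(k)}_{\sigma^*,i^*}$ together with the corresponding surviving $\mathcal{T}_{k+1}$, losing only a constant factor of colors.

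At the bottom of the descent, $|\mathcal{T}_{q-2}|=\Omega(r_{q-1})$ and each $c\in\mathcal{T}_{q-2}$ concentrates $V^{(q-2)}$ in at most $3$ classes. A final iteration of the same local analysis at level $q-1$ inside $V^{(q-2)}$ shows that for each such $c$ and each good child pair, one side has heavy set of size $\le 1$ and thus admits a subset of fractional size $\ge 1-O(\varepsilon)$ lying entirely inside a single $c$-edge-free $A_j^{(c)}$. Averaging once more identifies a single $V^*\subseteq V^{(q-2)}$ of size $\ge\alpha_{q-2}|V^{(q-2)}|$ that is good for at least $|\mathcal{T}_{q-2}|/O(1)\ge r_{q-1}/(2^{4q}+1)=(1-z)r_{q-1}$ colors; Lemma~\ref{lem:intersecting_sets_size} applied to their $c$-edge-free subsets of $V^*$ then produces $S\subseteq V^*$ of size at least $\beta|V^*|\ge\beta\alpha_0\alpha_1\cdots\alpha_{q-2}n=\gamma_{q-2}n$ on which at least $(1-z)r_{q-1}$ of the original colors have no edges. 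The induced coloring on $S$ is therefore $\bigl((zr_{q-1},\ldots,zr_{q-1}),(1,0,\ldots,0),\varepsilon^{q-1}\bigr)$-restricted, as required. The main obstacle I anticipate is the level-wise calibration of the heavy and bad-pair thresholds so that good pairs remain a constant fraction of the total child volume at every descent step, and so that the accumulated constant-factor color loss across the $q-2$ cascading choices matches precisely the prescribed fraction $1-z=1/(2^{4q}+1)$.
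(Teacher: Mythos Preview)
Your overall strategy --- using the $(2^q-1)$-partition $\{A_j^{(c)}\}$ and halving the number of ``heavy'' classes at each level via the $\varepsilon^{q-1}$-density of the nested pairs --- matches the paper's. The genuine gap is in how you interleave the averaging over colors with the level descent. You commit to a \emph{single} chain $V^{(0)}\supset V^{(1)}\supset\cdots$ and assert that ``well-balancedness guarantees that $V^{(k)}$ is properly-shattered along the chain.'' But well-balancedness only says $\vol(\mathcal{N}_k)\le 2^{-4(q-1)}\vol(\mathcal{L}_k)$ \emph{globally}. Once you have fixed a single $V^{(k-1)}$, its family of children $\mathcal{L}(V^{(k-1)})$ occupies only a tiny share of $\vol(\mathcal{L}_k)$, and the global bound on $\vol(\mathcal{N}_k)$ tells you nothing about $\mathcal{L}(V^{(k-1)})\cap\mathcal{N}_k$: it is entirely possible that \emph{every} child of your chosen set is non-properly-shattered, in which case the descent has no next step. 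Folding ``properly-shattered'' into the per-step color-averaging does not help, for the same reason --- there is no local lower bound on the properly-shattered volume inside $\mathcal{L}(V^{(k-1)})$.

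The paper avoids this by reversing the order of operations. For each fixed color $c$ it carries an entire \emph{family} $\mathcal{S}_m\subseteq\mathcal{L}_m$ through the levels, maintaining the invariant $\vol(\mathcal{S}_m)\ge 2^{-4m+2}\vol(\mathcal{L}_m)$. Because this is a constant fraction of the whole level, subtracting the globally small $\vol(\mathcal{N}_m)$ still leaves a constant fraction of $\mathcal{S}_m$ properly-shattered, and the descent proceeds with the required volume. Only at the bottom level $m=q-1$ does the paper average over colors to locate a single $V\in\mathcal{L}_{q-1}$ lying in $\mathcal{S}_c$ for $\Omega(r_{q-1})$ colors; then Lemma~\ref{lem:intersecting_sets_size} finishes exactly as you outline. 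If you rewrite your descent to track per-color families rather than a single chain and postpone the color-averaging to level $q-1$, your argument will go through (and the threshold calibration you anticipate is handled by taking the ``heavy'' cutoff at level $m$ to be $\varepsilon^{q-m}$ rather than a fixed $\varepsilon^{q-1}$).
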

\begin{proof}
Let $R = 2^{4q} R_{\ref{lem:intersecting_sets_size}}$.
Denote the set of colors as $\mathcal{C}$, where $|\mathcal{C}| = r_{q-1}$.
Suppose that $(c_1, \ldots, c_{q-1})$ is well-balanced for $c_1,\ldots, c_{q-1} \in \mathcal{C}$.
Since the coloring is a chromatic-$(2^q, q+1)$-coloring, for each color $c$, the graph consisting of edges colored with $c_1,\ldots, c_{q-1}$ and $c$ is $(2^q-1)$-colorable. Thus, we can partition the vertex set into sets $A_1, \ldots, A_{2^q-1}$, each containing no edge of color $c$. Note that each $A_k$ cannot satisfy
$|A_k \cap V_i| \ge \varepsilon^{q-1}|V_i|$ for both $i= \pm 1$ since $V_1 \cup V_{-1}$ is $\varepsilon^{q-1}$-dense. Thus, at least half of them, without loss of generality $A_1, \ldots , A_{2^{q-1}}$, intersect $V_1$ in smaller than $\varepsilon^{q-1}$ fraction of its vertices. 
Define $\mathcal{S}_1=\{V_1\}$. For $m\in [q-1]$, 
we will iteratively construct families $\mathcal{S}_{m} \subseteq \mathcal{L}_m$ so that 
$\vol(\mathcal{S}_m) \ge 2^{-4m+2}\vol(\mathcal{L}_m)$ and 
each set $V \in \mathcal{S}_m$ satisfies $|V \cap A_k| \ge \varepsilon^{q-m}|V|$ for at most 
$2^{q-m}-1$ sets $A_k$. Note that the condition holds for $m=1$.

Suppose that we constructed the family $\mathcal{S}_m$ for some $m \le q-2$.
For each $V \in \mathcal{S}_m$, consider the $(m+1)$-th level sets $V_i, V_{-i} \in \mathcal{L}(V)$
and recall that these sets form $\varepsilon^{q-1}$-dense pairs $W_{i} = V_{i} \cup V_{-i}$.
Let $T$ be the set of indices $k$ for which $|A_k \cap V| \ge \varepsilon^{q-m}|V|$, so that $|T|\le 2^{q-m} - 1$ by construction.
Let $I_m$ be the set of indices $i$ for which $|W_{i} \cap A_k| \ge \frac{1}{2}\varepsilon^{q-m-1} |W_{i}|$ for some $k\notin T$. Then, we have
\begin{equation*}
	\frac{1}{2}\varepsilon^{q-m-1} \left| \bigcup_{i\in I_m} W_{i} \right| \le \left| \bigcup_{i\in I_m} (W_{i} \cap \bigcup_{k\notin T} A_k) \right| \le (2^q-|T|) \varepsilon^{q-m}|V|
\end{equation*}
and deduce that 
\begin{equation*}
\left| \bigcup_{i\in I_m} W_{i} \right| \le (2^{q+1}-2^{q+1-m})\varepsilon|V|.
\end{equation*}

Fix $i\notin I_m$. Since the pair $W_i = V_i \cup V_{-i}$ is $\varepsilon^{q-1}$-dense, each $k\in T$ cannot satisfy both $|V_i \cap A_k| \ge \varepsilon^{q-m-1}|V_i|$ and $|V_{-i} \cap A_k| \ge \varepsilon^{q-m-1}|V_{-i}|$. Moreover, since $i \notin I_m$, each $k \notin T$ satisfies 
$|V_i \cap A_k| \le |W_i \cap A_k| < \frac{1}{2}\varepsilon^{q-m-1}|W_i| = \varepsilon^{q-m-1}|V_i|$.
Similarly, $|V_{-i} \cap A_k| < \varepsilon^{q-m-1}|V_{-i}|$ for each $k \notin T$.
Therefore, at least one of the two sets $V_i$ and $V_{-i}$ intersects at most $\lfloor \frac{|T|}{2} \rfloor \le 2^{q-m-1}-1$ sets $A_k$ in more than $\varepsilon^{q-m-1}$ fraction of its vertices. 
Let $J_m$ be the set of (positive or negative) indices $i$ for which $V_{i}$ satisfies $|V_i \cap A_k| \ge \varepsilon^{q-m-1}|V_i|$ for at most $2^{q-m-1}-1$ sets $A_k$. 
Note that for each $i \notin I_m$, either $i \in J_m$ or $-i \in J_m$. 
Hence
\[
	\sum_{i \in J_m} |V_{i}|
	\ge \frac{1}{2} \left( \vol(\mathcal{L}(V)) - \left|\bigcup_{i \in I_m} W_{i}\right|\right)
	\ge \frac{1}{2} \Big( \vol(\mathcal{L}(V)) - (2^{q+1} - 2^{q+1-m})\varepsilon |V| \Big)
\]
If $V$ is properly-shattered, then $\vol(\mathcal{L}(V)) \ge 2^{q+2} \varepsilon |V|$ and thus
\[
	\sum_{i \in J_m} |V_{i}|
	\ge \frac{1}{4} \vol(\mathcal{L}(V)).
\]

Define $\mathcal{S}_{m+1}$ as the union of the family of sets $V_{i}$ for $i\in J_m$ over all $V \in \mathcal{S}_m$ (here we are abusing notation since the set $J_m$ differs for each $V$).
Recall that $\mathcal{N}_{m} \subseteq \mathcal{L}_m$ is the family of sets that are not properly-scattered.
We have
\begin{align*}
	\vol(\mathcal{S}_{m+1})
	&\,\ge \sum_{V \in \mathcal{S}_m \setminus \mathcal{N}_m } \frac{1}{4} \vol(\mathcal{L}(V)).
\end{align*}
By the definition of properly-scattered sets, well-balanced colors, and our hypothesis $\vol(\mathcal{S}_m) \ge 2^{-4m+2}\vol(\mathcal{L}_m)$,
\begin{align*}
	\sum_{V \in \mathcal{S}_m \setminus \mathcal{N}_m} \vol(\mathcal{L}(V)) 
	\ge&\, \sum_{V \in \mathcal{S}_m \setminus \mathcal{N}_m} 2^{q+2}\varepsilon |V|
	= 2^{q+2}\varepsilon \left( \vol(\mathcal{S}_m) - \vol(\mathcal{N}_m) \right) \\	
	\ge&\,  2^{q+2}\varepsilon \left(2^{-4m+2}\vol(\mathcal{L}_m) - 2^{-4q+4}\vol(\mathcal{L}_m)\right)
	\ge 2^{-4m+1} \cdot 2^{q+2}\varepsilon \vol(\mathcal{L}_m).
\end{align*}
Hence
\[
	\vol(\mathcal{S}_{m+1})
	\ge  2^{-4m-1} \cdot 2^{q+2}\varepsilon \vol(\mathcal{L}_m)
	\ge 2^{-4m-2} \vol(\mathcal{L}_{m+1}),
\]
where the second inequality follows from \eqref{eq:vol_ub}.
Note that $\mathcal{S}_{q-1}$ is the family of sets $V \in \mathcal{L}_{q-1}$ that intersect at most one $A_k$ in more than $\varepsilon$ fraction of its vertices. This implies that there is a subset of size $(1-(2^{q}-2)\varepsilon)|V|$ in $V$ containing no edge of color $c$, obtained by taking $A_k \cap V$. 

Since the analysis above was for a fixed color $c\in \mathcal{C}$, in order to distinguish between different choices of $c$, we abuse notation and write $\mathcal{S}_c$ for the set $\mathcal{S}_{q-1}$
obtained by considering color $c$.
Since $\vol(\mathcal{S}_c) \ge 2^{-4q+6}\vol(\mathcal{L}_{q-1})$ for each $c \in \mathcal{C}$,
we have
\[
	\sum_{c \in \mathcal{C}} \vol(\mathcal{S}_c)
	\ge |\mathcal{C}| \cdot 2^{-4q+6}\vol(\mathcal{L}_{q-1}).
\]
On the other hand,
\[
	\sum_{c \in \mathcal{C}} \vol(\mathcal{S}_c)
	= \sum_{V \in \mathcal{L}_{q-1}} |V| \cdot |\{c \in \mathcal{C} \,: \, V \in \mathcal{S}_c\} |
	\le \vol(\mathcal{L}_{q-1}) \cdot \max_{V \in \mathcal{L}_{q-1}} |\{c \in \mathcal{C} \,: \, V \in \mathcal{S}_c\} |.
\]
Hence there exists $V \in \mathcal{L}_{q-1}$ such that $V \in \mathcal{S}_c$ for at least 
$2^{-4q+6}|\mathcal{C}|$ colors $c \in \mathcal{C}$.
Let $\mathcal{C}'$ be an arbitrary subset of $2^{-4q+2}|\mathcal{C}| = 2^{-4q+2}r_{q-1}$ of these colors.
For each $c \in \mathcal{C}'$, there exists a subset $V_c \subseteq V$ of size at least
$(1-(2^{q}-2)\varepsilon)|V|$ that contains no edge of color $c$. 
Therefore an application of Lemma \ref{lem:intersecting_sets_size} gives a set $S \subseteq V$ of size $(1-(2^{q+1}-4)\varepsilon)^{\frac{r_{q-1}}{2^{4q}}}|V|$ in $V$ and a set $\mathcal{C}''$ of $\frac{r_{q-1}}{2^{4q}}$ colors such that $S$ contains no edge of color $c$ for any $c\in\mathcal{C}''$. For simplicity, we will take an arbitrary subcollection of $\frac{r_{q-1}}{2^{4q}+1}$ of these colors.
Since $S$ has size at least $(1-2^{q+3}\varepsilon)^{\frac{r_{q-1}}{2^{4q}}}|V| \ge \alpha_0 \cdots \alpha_{q-2}\beta n = \gamma_{q-2}^{-1}n$ and the subgraph induced on $S$ is colored by at most $\frac{2^{4q}}{2^{4q}+1}r_{q-1}$ colors, 
this proves the lemma.
\end{proof}

The next lemma takes care of the cases when one of the coordinates of $\vec{r} = (r_1, \ldots, r_{q-1})$
is small, and will be used as the base cases of our recursion.
The proof is almost identical to that of Lemma~\ref{lem:G_base_case}, but needs to be slightly modified
due to the fact that our definition of sparse colors has changed.

\begin{lem}\label{lem:general_base_case}
For every pair of natural numbers $R$ and $k<q-1$, there exist constants $\gamma$ and $N_0$ such that if $\varepsilon, r$ and $x_2, \ldots, x_{q-1}$ satisfy $\max(x_{k+1}, \ldots, x_{q-1})< \varepsilon e^{-(\log(1/\varepsilon)/\varepsilon)\log((r-R)\gamma)}$, then
\begin{equation*}
	G((R, \ldots, R, r_{k+1}, \ldots, r_{q-2}, r), \vec{x}, \varepsilon)
	\le \varepsilon^{-1}N_0.
\end{equation*}
\end{lem}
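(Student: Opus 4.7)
The plan is to adapt the argument of Lemma~\ref{lem:G_base_case} to the refined two-sided sparsity definition used in this section, where an $(x_i,\varepsilon)$-sparse color forbids balanced $\varepsilon$-dense pairs with parts of size in the interval $[x_i n, \varepsilon n]$ rather than merely above $x_i n$. I would set $\gamma = F_{\chi}(R, 2^q, q+1)$ and take $N_0$ to be a sufficiently large constant depending on $R$ and $q$. Given an $(\vec{r}, \vec{x}, \varepsilon)$-restricted coloring of $K_N$ with $N \ge \varepsilon^{-1} N_0$, the key new move is to fix an arbitrary subset $S \subseteq V(K_N)$ of size $\lceil \varepsilon N \rceil \ge N_0$: any $\varepsilon$-dense pair produced inside $S$ automatically has parts of size at most $|S|/2 < \varepsilon N$, which takes care of the new upper end of the sparsity range.

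Inside $K_N[S]$, consider the subgraph $H$ spanned by the $R$ non-restricted colors (those in $C_1$). If $H$ contains a copy of $K_\gamma$, then by the definition of $\gamma$ the induced $R$-coloring of that $K_\gamma$ contains a $2^q$-chromatic subgraph using at most $q$ colors, giving the forbidden structure inside $K_N$. Otherwise $H$ is $K_\gamma$-free, and Tur\'an's theorem forces the restricted-color subgraph on $S$ to have density at least some positive constant depending on $\gamma$. By pigeonhole over the $r - R$ restricted colors, which necessarily all lie in $C_i$ for some $i \ge k+1$ because $C_2, \ldots, C_k$ are empty, some restricted color $c \in C_i$ achieves density at least $\tfrac{1}{(r-R)\gamma'}$ in $K_N[S]$ for an appropriate constant $\gamma'$ depending on $\gamma$.

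Applying Lemma~\ref{lem:dense_pairs} to the color-$c$ subgraph of $K_N[S]$ then yields a balanced $\varepsilon$-dense pair $(X, Y)$ with
\[
|X| = |Y| \ge |S| \cdot \Big(\tfrac{1}{(r-R)\gamma'}\Big)^{\log(1/\varepsilon)/\varepsilon} \ge \varepsilon N \cdot e^{-(\log(1/\varepsilon)/\varepsilon)\log((r-R)\gamma')}.
\]
The upper bound $|X| \le |S|/2 < \varepsilon N$ is automatic. Absorbing $\gamma'$ into the constant $\gamma$ that appears in the lemma's conclusion, the hypothesis on $\max(x_{k+1},\ldots,x_{q-1})$ then gives $|X| > x_i N$. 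Thus $(X, Y)$ lies in the forbidden size range $[x_i N, \varepsilon N]$ and contradicts the $(x_i, \varepsilon)$-sparsity of $c$ in $K_N$, forcing the first case and completing the proof.

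The main obstacle is precisely the new upper bound $\varepsilon n$ in the sparsity condition: one cannot simply shrink a large $\varepsilon$-dense pair while preserving $\varepsilon$-density, so Lemma~\ref{lem:G_base_case}'s argument does not transfer verbatim. Passing up front to a subset $S$ of size $\lceil \varepsilon N \rceil$ is the clean workaround, and it is exactly what produces the $\varepsilon^{-1}$ factor in the stated bound $\varepsilon^{-1} N_0$.
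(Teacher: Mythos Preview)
Your proposal is correct and follows essentially the same route as the paper: both set $\gamma = F_\chi(R,2^q,q+1)$, pass to an arbitrary subset $S$ of size about $\varepsilon N$ to force any dense pair found there to have parts of size at most $\varepsilon N$, then use Tur\'an on the non-restricted colors inside $S$, pigeonhole onto a restricted color, and apply Lemma~\ref{lem:dense_pairs} to produce a dense pair whose part sizes land in the forbidden interval $[x_i N,\varepsilon N]$. The only cosmetic difference is that you introduce an auxiliary constant $\gamma'$ for the Tur\'an density and absorb it afterward, whereas the paper works with $\gamma$ throughout; your explicit explanation of why the $\varepsilon^{-1}$ factor appears is a nice touch.
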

\begin{proof}
Define $\gamma = F_\chi(R,2^{q},q+1)$. Let $N_0 = \gamma(\gamma-1)$ and
$N$ be a natural number satisfying $N \ge \varepsilon^{-1}N_0$. 
Consider a chromatic-$(2^q,q+1)$-coloring of $K_N$ that is 
$((R, \ldots, R, r_{k+1}, \ldots, r_{q-2}, r), \vec{x}, \varepsilon)$-restricted.
We may view this coloring as a coloring with $r$ colors where
$R$ colors have no restriction, and $r-R$ colors are $(x,\varepsilon)$-sparse
for $x = \max(x_{k+1}, \ldots, x_{q-1})$. We refer to the former $R$ colors
as {\em non-restricted}, and the latter $r-R$ colors as {\em restricted}.
Consider an arbitrary subset of vertices of size $\varepsilon N \ge N_0$
and let $K$ be the subgraph induced on these vertices. 

If the subgraph $H$ of $K$ consisting of the edges of non-restricted colors
contains a copy of $K_\gamma$, then by definition, 
we can find $q$ colors whose union has chromatic number $2^q$. 
Therefore, $H$ does not contain a copy of $K_\gamma$, and thus by Tur\'an's theorem,
$H$ has density at most $1 - \frac{1}{\gamma}$.
Then the complement of $H$ in $K$ has density at least $\frac{1}{\gamma}$. Since $H$
is $(r-R)$-colored with restricted colors, there exists a restricted color of density at least $\frac{1}{(r-R)\gamma}$ in $K_N$.
By Lemma~\ref{lem:dense_pairs}, we can find an $\varepsilon$-dense pair
with parts of size at least $e^{-(\log(1/\varepsilon)/\varepsilon)\log((r-R)\gamma)} \varepsilon N > x N$ and at most $N_0 \le \varepsilon N$. However, this contradicts the fact that our color was a restricted color. \end{proof}

We now combine the results into an upper bound on $F_{\chi}(r, 2^q, q+1)$.

\begin{thm}
\begin{equation*}
F_{\chi}(r, 2^q, q+1) \le e^{Cr^{1-1/q} (\log{r})^q}
\end{equation*}
\end{thm}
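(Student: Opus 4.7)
The plan is to mimic the proof of Theorem~\ref{thm:p=3} using the three lemmas of this section and the tower parameters $\alpha_i = \beta^{-1}(\beta\delta)^{(3y)^i}$ already set up. Starting from an arbitrary chromatic-$(2^q, q+1)$-coloring of $K_{n_0}$ with $n_0 = F_\chi(r, 2^q, q+1) - 1$, which is $(\vec{r}_0, \vec{x}_0, \varepsilon^{q-1})$-restricted for $\vec{r}_0 = (r, \ldots, r)$ and $\vec{x}_0 = (1, 0, \ldots, 0)$, I would iterate as follows. At each step, determine the largest level $k^{\ast} \in \{1, \ldots, q-1\}$ up to which the current coloring is well-balanced. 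If $k^{\ast} < q-1$, apply Lemma~\ref{lem:not_balanced} to pass to a subset of relative size at least $\gamma_{k^{\ast}-1}$ (call this a \emph{type A} step). If $k^{\ast} = q-1$, apply Lemma~\ref{lem:G_balanced} to pass to a subset of relative size at least $\gamma_{q-2}$ on which $\vec{r}$ is uniformly rescaled by $z$ and $\vec{x}$ is reset to $(1, 0, \ldots, 0)$ (a \emph{type B} step). Repeat until some coordinate of $\vec{r}$ falls below the threshold $R$ from Lemma~\ref{lem:general_base_case}, at which point that lemma bounds the remaining size by $\varepsilon^{-1}N_0$.

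The iteration count is organized as in the $q=3$ argument. Each type B step multiplies $r_{q-1}$ by $z$, so there are at most $y = \log_{1/z} r$ of them in total. Between two consecutive type B steps, a single coordinate $r_k$ for $k < q-1$ can be multiplied by $z$ at most $O(y)$ times before falling below $R$, giving at most $O((q-2)y)$ type A steps per block and $O(qy^2)$ iterations overall. Since each iteration shrinks the vertex set by a factor of at most $\gamma_{q-2}^{-1}$, the aggregate size loss is $\gamma_{q-2}^{-O(qy^2)}$; multiplying by the base-case bound $\varepsilon^{-1}N_0$, substituting $\varepsilon = r^{-1/q}\log r$, $y = \Theta(\log r)$, and using $\log(1/\delta) = (\log r)/\varepsilon^{q-1}$, a routine calculation yields the claimed $F_\chi(r, 2^q, q+1) \le e^{Cr^{1-1/q}(\log r)^q}$ for a constant $C = C(q)$.

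The main obstacle is sparsity control: ensuring that when the process halts, the resulting $\vec{x}$ satisfies the hypothesis of Lemma~\ref{lem:general_base_case}. Since a type B step resets $\vec{x}$ entirely, it suffices to track sparsity within a single block of consecutive type A steps. Within such a block, a type A step at level $k$ enforces the update $x_{k+1} \leftarrow \gamma_{k-1}^{-1}\max(\alpha_k, x_{k+1})$ while rescaling each later coordinate of $\vec{x}$ by $\gamma_{k-1}^{-1}$. The tower calibration $\alpha_i = \beta^{-1}(\beta\delta)^{(3y)^i}$ is designed precisely so that even after $O((q-2)y)$ compounding of $\gamma_{k-1}^{-1}$ factors, and even when the same level is revisited many times, the resulting sparsity at every coordinate $k+1$ stays below the threshold $\varepsilon e^{-(\log(1/\varepsilon^{q-1})/\varepsilon^{q-1})\log((r-R)\gamma)}$ demanded by Lemma~\ref{lem:general_base_case}. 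Verifying this inductive claim — essentially showing that the tower jump $\alpha_k \mapsto \alpha_{k+1}$ dominates all $O(y)$ applications of $\gamma_{k-1}^{-1}$ within a block — is the heart of the argument and closely parallels the sparsity bookkeeping at the end of the proof of Theorem~\ref{thm:p=3}.
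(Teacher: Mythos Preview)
Your iteration count is wrong for $q\ge 4$. You claim that between two consecutive type~B steps each coordinate $r_k$ with $k<q-1$ is multiplied by $z$ at most $O(y)$ times, yielding $O((q-2)y)$ type~A steps per block. But a type~A step at any level $j>k$ \emph{resets} $r_k$ to $zr_j$ (look at the formula for $\vec{r}'$ in Lemma~\ref{lem:not_balanced}), so within a single block $r_1$ can be pushed back up near $r$ by a level-$2$ step, driven down by $\sim y$ level-$1$ steps, pushed back up by the next level-$2$ step, and so on. The correct bound is nested: at most $y$ level-$(q-1)$ steps in total; between consecutive level-$(\ge k+1)$ steps at most $y$ level-$k$ steps; hence $a_k\le y^{q-k}$ and the total number of iterations is $O(y^{q-1})$, not $O(qy^2)$.

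This compounds with your crude per-step shrinkage. You charge every iteration $\gamma_{q-2}^{-1}$, which is of order $(\beta\delta)^{-(3y)^{q-2}}$, whereas a level-$k$ step only costs $\gamma_{k-1}^{-1}$ and the overwhelming majority of steps occur at low levels (in particular $\gamma_0^{-1}=(\beta\delta)^{-1}$). Even with your undercount of $O(qy^2)$ iterations, charging $\gamma_{q-2}^{-1}$ apiece already gives $(\beta\delta)^{-\Theta(y^q)}$ and hence an extra $\log r$ in the exponent; with the true iteration count $O(y^{q-1})$ it is far worse. The paper instead bounds $\prod_k\gamma_{k-1}^{-a_k}$ with $a_k\le y^{q-k}$, and the tower $\alpha_i=\beta^{-1}(\beta\delta)^{(3y)^i}$ is calibrated so that each factor contributes $(\beta\delta)^{-O(3^{k}y^{q-1})}$, summing to the stated $(\log r)^q$. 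The same nested count governs the sparsity check: after the last level-$(\ge k+1)$ step one must allow up to $y^{k-j+1}$ level-$j$ steps for each $j\le k$, so the quantity to control is $\alpha_k\prod_{j\le k}\gamma_{j-1}^{-y^{k-j+1}}$, not a flat ``$O((q-2)y)$ compounding of $\gamma_{k-1}^{-1}$ factors''. Your overall plan is exactly the paper's, but these two bookkeeping errors prevent it from reaching the claimed exponent.
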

\begin{proof}
Define $\varepsilon_0 = (\varepsilon_0)_{\ref{lem:dense_pairs}}$ and recall that $\varepsilon = r^{-1/q} \log r$.
Let $R > \max\{R_{\ref{lem:not_balanced}}, R_{\ref{lem:G_balanced}}\}$ be large enough
so that for all $r \ge R$, we have $\varepsilon < \varepsilon_0$. 
Let $\gamma$ and $N_0$ be the constants from Lemma~\ref{lem:general_base_case} for this value of $R$.
It suffices to prove the theorem for $r \ge R$, since then we can adjust the value of $C$
so that the conclusion holds for all values of $r$. 

Recall that $F_{\chi}(r,2^q,q+1)=G((r,\ldots, r),(1,0,\ldots,0), \varepsilon^{q-1})$.
Define $\vec{r}_0 = (r,\ldots,r)$, $\vec{x}_0 = (1,0,\ldots,0)$, and $n_0 = G(\vec{r}_0,\vec{x}_0,\varepsilon^{q-1})-1$ so that there exists a 
$(\vec{r}_0, \vec{x}_0, \varepsilon^{q-1})$-restricted chromatic-$(2^q, q+1)$-coloring of $K_{n_0}$.
We obtain a bound on $n_0$ by recursively using Lemmas \ref{lem:not_balanced} and \ref{lem:G_balanced}.
 At each step, we take as input a $(\vec{r_i}, \vec{x_i}, \varepsilon^{q-1})$-restricted coloring of the complete graph on $n_i$ vertices and find a $(\vec{r}_{i+1}, \vec{x}_{i+1}, \varepsilon^{q-1})$-restricted coloring of a complete graph on $n_{i+1}$ vertices, using Lemma \ref{lem:not_balanced} or Lemma \ref{lem:G_balanced}.

Given a $(\vec{r_i}, \vec{x_i}, \varepsilon^{q-1})$-restricted coloring of the complete graph on $n_i$ vertices, suppose that it is well-balanced up to the $k$-th level. If $k=q-1$, then by Lemma~\ref{lem:G_balanced}, we may take
\begin{align*}
\vec{r}_{i+1}=(zr_{q-1} ,\ldots, zr_{q-1}),\quad
\vec{x}_{i+1}=(1,0,\ldots,0),\quad\textrm{and} \quad
n_{i+1} \ge \gamma_{q-2} n_{i}.
\end{align*} 
For $k<q-1$, by Lemma~\ref{lem:not_balanced}, we may take
\begin{align*}
\vec{r}_{i+1}&= (zr_{i,k}, \ldots zr_{i,k}, r_{i,k+1} \ldots, r_{i,q-1}), \\
\vec{x}_{i+1}&=(1,0,\ldots,0, \gamma_{k-1}^{-1} \max(\alpha_k, x_{i,k+1}) , \gamma_{k-1}^{-1} x_{i,k+2}, \ldots, \gamma_{k-1}^{-1} x_{i,q-1}), \quad \textrm{and} \\
n_{i+1}&\ge \gamma_{k-1}n_i.
\end{align*}
Repeat the process above as long as $r_{i,k} > R$ for all $k$.
We say that an iteration ran the {\em $k$-th level process} if the coloring
was well-balanced up to the $k$-th level.

Let $T$ be the time of termination.
Recall that $y = \log_{1/z}(r)$ and note that $y \le 2^{4q+1}\log{r}$. 
The termination condition immediately implies that
there can be at most $y$ occurrences of the $(q-1)$-th level process since the $(q-1)$-th coordinate of $\vec{r}_i$ shrinks by a factor of $z$ at each such iteration. 
Similarly, for $k<q-1$, the process terminates if the $k$-th level process occurs more than $y$ times without any occurrence of the $j$-th level process for $j>k$ in-between since the $k$-th co-ordinate of $\vec{r}_i$ increases only if a $j$-th level process for $j>k$ occurs and shrinks by a factor of $z$ at each $k$-th level process. 
We claim that the conditions of Lemma~\ref{lem:general_base_case} are satisfied 
with $\varepsilon_{\ref{lem:general_base_case}} = \varepsilon^{q-1}$ when the process terminates. 
Fix an index $k \in [q-2]$. 
Let $T_0$ be the last iteration before $T$ on which the $j$-th level process
for some $j \ge k+1$ occurred (if there were no such occurrences, then let $T_0 = 0$). 
Since we have $x_{T_0,k+1} = 0$, at the first time $t > T_0$ at which $x_{t,k+1}$ becomes non-zero
(which is when a $k$-th level process occurs),
we have $x_{t,k+1} = \gamma_{k-1}^{-1}\alpha_{k}$. Let $T_1$ be this time.
The observation above implies that for each $j \le k$, there are at most $y^{k-j+1}$ occurrences of
the $j$-th level process from time $T_1$ to $T$. 
Therefore
\begin{align*}
x_{T,k+1}
&\le \alpha_k \gamma_0^{-y^k} \gamma_1^{-y^{k-1}} \cdots \gamma_{k-1}^{-y} \\
&= \alpha_k \alpha_0^{-y^{k} - \ldots - y} \alpha_1^{-y^{k-1}-\ldots -y} \ldots \alpha_{k-1}^{-y} \beta^{-y^k - \ldots -y-1}\\
&\le \alpha_k \alpha_0^{-2y^{k}} \alpha_1^{-2y^{k-1}} \ldots \alpha_{k-1}^{-2y} \beta^{-2y^k}\\
&= (\beta\delta)^{(3y)^k} (\beta\delta)^{-2y^k} (\beta\delta)^{-2\cdot 3y^k} \ldots (\beta\delta)^{-2\cdot 3^{k-1}y^k} \beta^{-2y^k-1+2y^k + 2y^{k-1} + \ldots + 2y}\\
&\le (\beta\delta)^{y^k}\\
&\le ((1-2^{q+3}\varepsilon)^{\frac{r}{2^{4q}}})^{y^k} (e^{-\varepsilon^{1-q}\log(r)})^{y^k}\\
&\le e^{-r^{(q-1)/q} (\log r) 2^{3-3q} y^k}e^{-r^{(q-1)/q} (\log r)^{2-q} y^k}\\
&\le \varepsilon^{q-1} e^{-(\log(1/\varepsilon^{q-1})/ \varepsilon^{q-1} )\log(r\gamma)}.
\end{align*}
Hence the conditions of Lemma~\ref{lem:general_base_case} are satisfied at time $T$
and we have $n_T \le \varepsilon^{-(q-1)}N_0$.
Suppose for each $k$, we applied  the $k$-th level process $a_k$ times before reaching time $T$.
The discussion above implies $a_k \le y^{q - k}$ for all $k \in [q-1]$.
Therefore
\begin{align*}
	n_0
	&\le \varepsilon^{-(q-1)} N_0 \gamma_0^{-a_1} \gamma_1^{-a_2} \cdots \gamma_{q-2}^{-a_{q-1}}\\
	&\le \varepsilon^{-(q-1)}N_0 \gamma_0^{-y^{q-1}} \gamma_1^{-y^{q-2}} \cdots \gamma_{q-2}^{-y}\\
	&= \varepsilon^{-(q-1)}N_0 \alpha_0^{-y^{q-1} - \ldots - y} \alpha_1^{-y^{q-2}-\ldots -y} \ldots \alpha_{q-2}^{-y} \beta^{-y^{q-1} - \ldots -y}\\
	&\le \varepsilon^{-(q-1)}N_0 \alpha_0^{-2y^{q-1}} \ldots \alpha_{q-2}^{-2y} \beta^{-2y^{q-1}}\\
	&= \varepsilon^{-(q-1)}N_0 (\beta\delta)^{-2y^{q-1}} (\beta\delta)^{-2\cdot 3y^{q-1}} \ldots (\beta\delta)^{-2\cdot 3^{q-2}y^{q-1}} \beta^{-2y^{q-1}+2y^{q-1} + 2y^{q-2} + \ldots + 2y}\\
	&\le \varepsilon^{-(q-1)}N_0 (\beta\delta)^{-3^{q-1}y^{q-1}}\\
	&\le \varepsilon^{-(q-1)}N_0 (\beta\delta)^{-3^{q-1}2^{(4q+1)(q-1)}(\log{r})^{q-1}}\\
	&< \varepsilon^{-(q-1)}N_0 e^{C_1\varepsilon r(\log{r})^{q-1}}e^{C_2/\varepsilon^{q-1} (\log{r})^q},
\end{align*}
where $C_1, C_2$ are positive constants.
Since $\varepsilon = r^{-1/q} \log r$, we have
\begin{equation*}
F_{\chi}(r, 2^q, q+1) \le e^{Cr^{1-1/q} (\log{r})^q},
\end{equation*}
for some constant $C$, as desired.
\end{proof}

\section{Remarks} \label{sec:remark}

The study of chromatic generalized Ramsey numbers raises interesting questions regarding the structure
of edge-colorings of complete graphs. These questions seem to be new types of questions
that have not been asked before.
Establishing lower bounds on $F_{\chi}(r,p,q)$ for various choices of 
parameters $(p,q)$ seems especially interesting since these questions ask to
find an edge-coloring of the complete graph where the union of color classes have small chromatic number.
For example, Conlon, Fox, Lee, and Sudakov \cite{CoFoLeSu} found a 
chromatic-$(4,3)$-coloring proving $F_{\chi}(r,4,3) \ge 2^{\Omega(\log^2 r)}$,
and used it to study a problem of Graham, Rothschild, and Spencer related to
the Hales-Jewett theorem.
One can see, using the product formula for chromatic number of union of graphs, that a chromatic-$(4,3)$-coloring is a chromatic $(2^q, q+1)$-coloring for all $q \ge 2$ (for $q=3$ we need the additional condition that 
each color class induces a bipartite graph). 
Hence their result implies 
\[
	F_{\chi}(r,2^q, q+1)
	\ge F_{\chi}(r,4,3) \ge 2^{\Omega(\log^2 r)}.
\]
It would be interesting to improve this bound for $q > 2$. 
The following question posed by Conlon, Fox, Lee, and Sudakov is closely related, and a positive answer to it will establish for each fixed $p$, the maximum value of $q$ 
for which $F_{\chi}(r,p,q)$ is super-polynomial.

\begin{ques}
Is $F_{\chi}(r,p,p-1)$ super-polynomial in $r$ for all $p > 4$?
\end{ques}

As mentioned in the introduction, the corresponding question for generalized Ramsey numbers has been
answered by Conlon, Fox, Lee, and Sudakov \cite{CoFoLeSu1} who provided an explicit edge-coloring
of the complete graph in which all subgraphs induced on $p$-vertex subsets contain at least $p-1$ 
distinct colors.
It is not clear whether the coloring (or some modification of it) can be used to answer the question above. 

\medskip

\noindent \textbf{Acknowledgements}. This work was done as a UROP (Undergraduate Research Opportunity Program) project. We thank Jacob Fox for suggesting the problem. We also thank Rik Sengupta for fruitful discussions.

\end{document}